\documentclass[a4paper,10pt]{article}

%
%

\usepackage[numbers,sort&compress]{natbib}

\usepackage[french,english]{babel}

\usepackage{pdfsync}

\usepackage{amsmath}                                                        
\numberwithin{equation}{section}
\usepackage{graphicx}                                                       
\usepackage{subfigure}
\usepackage{enumitem}                                                    
\usepackage{amssymb}                                                        
\usepackage[mathscr]{eucal}                                             
\usepackage[normalem]{ulem}                                                                 
\usepackage{pstricks}
\usepackage{rotating}
\usepackage[paperwidth=8.5in,paperheight=11in,top=1.00in, bottom=1.00in, left=1.00in, right=1.00in]{geometry}
\usepackage{mathtools}                                                      
\mathtoolsset{showonlyrefs=true}                                    
\usepackage{fixltx2e,amsmath}                                           
\MakeRobust{\eqref}

\usepackage{dsfont}

\linespread{1.3}                                                                    
\usepackage{amsthm}                                                             
\allowdisplaybreaks

\theoremstyle{plain}
\newtheorem{theorem}{Theorem}
\numberwithin{theorem}{section}

\newtheorem{lemma}[theorem]{Lemma}                              
\newtheorem{proposition}[theorem]{Proposition}
\newtheorem{corollary}[theorem]{Corollary}
\theoremstyle{definition}
\newtheorem{definition}[theorem]{Definition}

\newtheorem{notation}[theorem]{Notation}
\newtheorem{remark}[theorem]{Remark}


\usepackage{color, soul}



\def \aa {\bar{\a}}
\def \av {{\a}}
\def \iot {{\kappa}}

\newcommand{\Canis}[1]{\mathbf{C}^{#1} }

\newcommand{\Linf}[2]{L^{\infty}_{#1}(\Canis{#2})}

\newcommand{\Cint}[2]{\mathbf{C}_{#1}^{#2} }

\newcommand{\St}[1]{\mathcal{S}_{#1}}



\def \ttau {t}

\def \a {{\alpha}}
\def \b {{\beta}}
\def \d {{\delta}}

\def \G {{\Gamma}}

\def \R {{\mathbb {R}}}
\def \N {{\mathbb {N}}}

\def \x {{\xi}}
\def \e {{\varepsilon}}
\def \eps {{\varepsilon}}

\def \t {{\tau}}
\def \t {{\tau}}
\def \n {{\nu}}
\def \m {{\mu}}
\def \y {{\eta}}

\newcommand\dd{d}

\def \g {{\gamma}}

\def \phi {{\varphi}}

\def \tilde {\widetilde}

\def\p{\partial}

\def \a {{\alpha}}
\def \b {{\beta}}
\def \d {{\delta}}

\def \G {{\Gamma}}


\def \rn  {{\mathbb {R}}^{N}}
\def \R  {{\mathbb {R}}}
\def \x {{\xi}}
\def \g {{\gamma}}
\def \e {{\varepsilon}}
\def \eps {{\varepsilon}}
\def \t {{\tau}}
\def \n {{\nu}}
\def \m {{\mu}}
\def \y {{\eta}}

\def \p {{\partial}}
\def \a {{\alpha}}

\def \d {{\delta}}


\def \a {{\alpha}}
\def \b {{\beta}}
\def \d {{\delta}}

\def \G {\Ga}

\def \Ga {{\Gamma}}

\def \R {{\mathbb {R}}}
\def \N {{\mathbb {N}}}

\def \x {{\xi}}
\def \e {{\varepsilon}}
\def \eps {{\varepsilon}}

\def \t {{\tau}}
\def \t {{\tau}}
\def \n {{\nu}}

\def \m {{\mu}}
\def \y {{\eta}}

\def \g {{\gamma}}

\def \phi {{\varphi}}

\def \tilde {\widetilde}

\def \A {\mathcal{A}}

\def \rn {{\mathbb {R}}^{N}}

\def \L {\mathscr{L}}

\def \Lcal {{\A+Y}}
\def \Lcalp {{(Y+\A)}}

\def \à {{\`a }}
\def \è {{\`e }}
\def \ò {{\`o }}
\def \ù {{\`u }}

\newcommand\Ac{\mathscr{A}}

\newcommand{\<}{\langle}
\renewcommand{\>}{\rangle}
\renewcommand{\(}{\left(}
\renewcommand{\)}{\right)}

\def \sol {p} 
\def \gg {\mathbf{G}} 
\def \param {\mathbf{P}}
\def \C {\mathcal{C}}
\def \TT {T} 
\def \rr {q} 
\def \T {T}

\def \ee {\mathbf{e}}
\newcommand{\gauss}{\mathbf{G}} 

\def \h {h}


\makeatletter
\def\section{\@startsection {section}{1}{\z@}{3.25ex plus 1ex minus
 .2ex}{1.5ex plus .2ex}{\large\bf}}
\def\subsection{\@startsection{subsection}{2}{\z@}{3.25ex plus 1ex minus
 .2ex}{1.5ex plus .2ex}{\normalsize\bf}}
\@addtoreset{equation}{section} 

%
%

\begin{document}

\title{
Optimal Schauder estimates for kinetic Kolmogorov equations with time measurable coefficients}

\author{Giacomo Lucertini \and  Stefano Pagliarani \and Andrea Pascucci\footnote{Department of Mathematics, University of Bologna, Piazza di Porta san Donato, 5 Bologna}}



\date{This version: \today}

\maketitle

\begin{abstract}
{We prove global Schauder estimates for kinetic 
Kolmogorov equations with coefficients that are H\"older continuous in the spatial variables but
only measurable in time. Compared to other available results in the literature, our estimates are
optimal in the sense that the inherent H\"older spaces are the strongest possible under the given
assumptions: in particular, under a parabolic H\"ormander condition, we introduce H\"older norms
defined in terms of the intrinsic geometry that the operator induces on the space-time variables.
The technique is based on the existence and the regularity estimates of the fundamental solution
of the equation, recently proved in \cite{lucertini2022optimal}. These results are
essential for studying backward Kolmogorov equations associated with 
kinetic-type diffusions, e.g. stochastic Langevin equation.}
\end{abstract}

\noindent \textbf{Keywords}: {global Schauder estimates,
H\"ormander condition, 
Kolmogorov equation, Langevin equation, kinetic diffusions, 
anisotropic diffusions, intrinsic H\"older spaces, fundamental solution.}

\vspace{2pt}\noindent \textbf{MSC}: 35B65, 35K65, 60J60

\vspace{2pt} \noindent \textbf{Statements and Declarations}\\ The authors have no relevant
financial or non-financial interests to disclose. 

\vspace{2pt} \noindent \textbf{Data Availability Statement}\\ Data sharing not applicable to this
article as no datasets were generated or analysed during the current study.

\section{Introduction}
{In recent years}, several sharp Schauder estimates have been proved for Kolmogorov equations with
coefficients that are H\"older-continuous in the space-variables but only measurable in time.
In this article we prove global Schauder estimates which we claim to be {\it optimal}, meaning
that the inherent H\"older spaces are the strongest possible under the given assumptions on the
coefficients. In particular, our results
include and improve {some} known estimates in the framework 
of {non-divergence form} 
operators satisfying a {parabolic} H\"ormander condition.

A prototype example of the class under consideration is
\begin{equation}
  \frac{{\sigma^2(t,v,x)}}{2}\p_{vv}+v\p_{x}+\p_{t},\qquad (t,v,x)\in\R^{3},
\end{equation}
which is the backward Kolmogorov operator of the system of stochastic differential equations
\begin{equation}\label{ae3}
  \begin{cases}
    dV_{t}= {\sigma(t,V_t,X_t)} dW_{t},&\\
    dX_{t}=V_{t}dt,
  \end{cases}
\end{equation}
where $W$ is a real Brownian motion. 
The study of these models is motivated by several applications, including kinetic theory and
finance.  In the classical Langevin model, $(V,X)$ describes velocity and position of a particle
in the phase space and is a pilot example of more complex kinetic models (cf. \cite{MR1972000},
\cite{MR4275241}, \cite{MR4229202}). In mathematical finance, $(V,X)$ represents the log-price and
average processes utilized in modeling path-dependent financial derivatives, such as Asian options
(cf. \cite{MR1830951}, \cite{MR2791231}).

We now introduce the general class under consideration. 
Let $d,N\in\N$, with $d\le N$, be fixed throughout the paper. We denote by $(t,x)$ the point in
$\R\times\R^{N}$ and consider the second-order operator in non-divergence form
\begin{equation}\label{L}
 \L:=\Ac + Y
\end{equation}
where
\begin{align}\label{Lter}
 \Ac &:=\frac{1}{2}\sum_{i,j=1}^{d} a_{ij}(t,x)\p_{x_i x_j}+ \sum_{i=1}^{d} a_{i}(t,x)\p_{x_i}
 +a(t,x),\\
  Y&:=\p_{t}  +  \sum_{i,j=1}^{N}b_{ij}x_{j}\p_{x_{i}}= \p_t + \langle B x,\nabla\rangle,
\end{align}
and $B=(b_{ij})_{i,j=1,\cdots, d}$ is a constant matrix of dimension $N\times N$. The diffusion
part $\Ac$ is an elliptic operator on $\R^d$, while the drift (or transport) term $Y$ is a first
order differential operator
on $\R^{N+1}$. We impose two main structural {hypotheses}: 
\begin{itemize}
\item[{$\mathbf{(H.1)}$}] the matrix $A=\(a_{ij}\)_{i,j=1,\dots,d}$ is
symmetric and there exists a positive constant $\m$ such that 
\begin{equation}\label{aaa}
 {\m^{-1}}{|\y|^2} \leq \sum_{i,j=1}^{d}a_{ij}(t,x)\y_i \y_j \leq \m |\y|^2,\qquad x\in\R^N,\
 \y\in\R^{d},
\end{equation}
for almost every $t\in[0,\TT]$ where $T>0$ is fixed once for all;
\item[{$\mathbf{(H.2)}$}] 
{the following H\"ormander condition is satisfied:}
\begin{equation}\label{horcon}
 \text{rank } \text{Lie}(\partial_{x_1},\dots,\partial_{x_{d}},Y)= N+1.
\end{equation}
\end{itemize}
The focus of the paper is on the case $d<N$, which is $\L$ is 
fully degenerate, namely
no coercivity condition on $\R^N$ is satisfied. Condition \eqref{horcon} is also known as {\it parabolic} 
H\"ormander condition since the drift $Y$ plays a key role in the generation of the Lie algebra.

We consider the Cauchy problem
\begin{equation}\label{eq:Cauchy_prob}
 \begin{cases}
  \L u = f &\text{on }\St{T},\\
  u(T,\cdot)= g &\text{on }\R^N,
\end{cases}
\end{equation}
posed on the strip
\begin{equation}\label{aaa1}
  \St{T}:=\,]0,T[\times\R^{N}.
\end{equation}
Generally speaking, Schauder estimates give a bound of some H\"older norm of the solution $u$ in
terms of some (possibly different) H\"older norms of the data, namely the coefficients of $\L$,
the non-homogeneous term $f$ and the datum $g$. The ``strength'' of a Schauder estimate depends on
the norms involved and this is a sensitive issue in the theory of degenerate
PDEs: 
clearly, for a given H\"older norm on the solution $u$, the weaker the norms on the data, the
stronger the Schauder estimate; 
conversely, if the norms on the data are given, then the stronger the norm on $u$ the stronger the
Schauder estimate. Now, in the literature on degenerate Kolmogorov equations, we may recognize at
least two notions of H\"older norm as well as variants of them:
the so-called {\it anisotropic} 
and {\it intrinsic} 
norms, whose precise definitions are given in Section \ref{secHol}. {
Intuitively, the former norm takes into account the anisotropic behavior in space induced by the
underlying diffusion, but does not require any time-regularity. The intrinsic norm, by opposite,
is induced by the geometric properties of the differential operator $\L$ and takes into account
the regularity along the vector field $Y$ (and thus along the time variable). Therefore, the
intrinsic norm is stronger in the sense that it allows to see the full regularizing effect (in
both space and time) of the fundamental solution of $\L$.}

Roughly speaking, we may catalogue the known Schauder estimates for solutions to
\eqref{eq:Cauchy_prob} as follows:
\begin{itemize}
  \item {\it{anisotropic-to-anisotropic}}: 
 the anisotropic norms of the data bound the anisotropic norm of the  solution, as in \cite{MR1475774}, \cite{MR2136978}, \cite{MR2534181} and \cite{MR4334312};
  \item {\it{intrinsic-to-intrinsic}}: the intrinsic norms of the data bound the intrinsic norm of the solution, as in \cite{MR1751429}, \cite{difpol}, {\cite{MR4072211}},  \cite{MR4275241} and  \cite{MR4405316};
  \item {\it{anisotropic-to-intrinsic}}: the anisotropic norms of the data bound the intrinsic norm of the
  solution. This class is stronger than the two above: 
only recently, partial results were proved 
in \cite{dong2022global} and \cite{biagi2022schauder}.
\end{itemize}
Our main result, Theorem \ref{th:main}, 
provides an optimal {\it{anisotropic-to-intrinsic}} global Schauder estimate which improves the
results in \cite{dong2022global} and \cite{biagi2022schauder} in a subtle but crucial way, as
explained in Section \ref{revlit}. 
For instance, the H\"older norm we adopt for the solution $u$ is strong enough to derive intrinsic
Taylor formulas and therefore also an It\^o formula for the underlying diffusion processes, which
is a fundamental tool in stochastic calculus.

{Our estimate is global in that it holds all the way up to the boundary, with an explosion factor
that depends on the regularity of $g$, namely $(T-t)^{-\frac{2+\alpha-\beta}{2}}$: here $\alpha$
and $\beta$ represent the H\"older exponents of the solution $u$ and of the terminal datum $g$,
respectively. In particular, it is possible to recognize two limiting cases: $\beta=2+\alpha$, no
explosion close to the boundary; $\beta=0$ ($g$ only continuous), maximum explosion rate. As a
corollary, we obtain a sharp regularity estimate (Corollary \ref{cor:regY}) in the $Y$ direction,
at the boundary $t=T$.
Although this is an expected phenomenon, the quantitative characterization of the penalty term is
novel, to the best of our knowledge, for degenerate Kolmogorov operators in the context of
variable coefficients or of intrinsic H\"older spaces. We refer the reader to
\cite{zhang2021cauchy} for the case of constant diffusion coefficients and anisotropic
Besov-H\"older spaces}. We also mention that intrinsic embedding theorems of Sobolev type were
recently proved in \cite{MR4444114} and \cite{pascucci2022sobolev}.

{Theorem \ref{th:main} comprises a well-posedness result for \eqref{eq:Cauchy_prob}. 
The proof of Theorem \ref{th:main} goes as follows. First we define a candidate solution $u$ via
Duhamel principle. After proving that it is actually a solution to \eqref{eq:Cauchy_prob}, we
prove the regularity estimates for the two convolution terms that constitute $u$, from which the
Schauder estimate follows. The proof critically relies on the recent results in
\cite{lucertini2022optimal}, where the existence of the fundamental solution of $\L$ was
established, together with optimal H\"older estimates, by means of a suitable modification of the
parametrix technique, already employed in \cite{MR1386366} and \cite{difpas} in the case of
intrinsic H\"older-continuous coefficients.}

\vspace{2pt}

{The rest of the article is organized as follows. Section \ref{sec:schauder} contains the main results and a detailed comparison with the related literature. Precisely, in Section \ref{secHol} we define both the anisotropic and intrinsic H\"older spaces; in Section \ref{sec:main_result} we state our main result, Theorem \ref{th:main},  and comment on it; Section \ref{revlit} contains the comparison with the literature. Section \ref{sec:proof} is entirely devoted to the proof Theorem \ref{th:main}.} 

\section{Schauder estimates}\label{sec:schauder}
{We begin by fixing some general notation that will be utilized throughout the rest of the
article.
Let $g:\R^N\rightarrow\R$. For any $i=1,\dots,N$, we denote by $\partial_i g(x)$ the partial
derivatives of $u$ with respect to $x_i$. We also denote by $\nabla_d$ the gradient operator
$(\partial_1, \dots, \partial_d)$ with respect to the first $d$ components and by $\nabla^2_d$ the
Hessian operator $(\partial_{ij})_{i,j=1,\dots,d}$ with respect to the first $d$ components. We
also consider the natural extensions of the operators $\nabla_d$ and $\nabla^2_d$ to functions
$f=f(t,x)$ defined on 
$\R\times \R^N$. We set the following basic functional spaces:
\begin{itemize}
\item[-] $C_b$, the set of bounded continuous functions $g:\R^N\rightarrow\R$, equipped with the norm
\begin{equation}
 \|g\|_{L^{\infty}}:=\sup\limits_{x\in\rn} |g(x)|;
\end{equation}
\item[-] $L^{\infty}_{\ttau}$, for $t>0$, the set of measurable functions 
$f$, defined on the strip $\St{\ttau}$ (cf. \eqref{aaa1}), such that the norm
\begin{equation}
 \| f \|_{L^{\infty}_{\ttau}}:= \sup_{(s,x)\in\St{\ttau}} |f(s,x)|
\end{equation}
is finite.
\end{itemize}

Finally, all the normed 
spaces in this article are defined for scalar valued functions and naturally extend to vector
valued functions by considering the sum of the norms of their single components.}

\subsection{H\"older spaces}\label{secHol} {In this section, we introduce the {\it anisotropic} and {\it
intrinsic} H\"older spaces that appear in the Schauder estimates for \eqref{eq:Cauchy_prob}.
Loosely speaking, the general idea behind the definition of these spaces is the following: 
\begin{itemize}
  \item anisotropic H\"older spaces are defined for functions of $x\in\R^{N}$, {\it assuming regularity in
all $N$ directions} w.r.t. an anisotropic distance that reflects the different time-scaling
properties of the underlying diffusion process. This distance is defined in term of an anisotropic
norm that assigns to each component of $x\in\R^{N}$ a different weight corresponding to the number
of commutators of
$\nabla_{d}
$ and $Y$ that are required to reach that direction. The definition then extends to functions defined on $\R^{N+1}$ 
by only requiring measurability and local boundedness with respect to the time variable.
  \item  intrinsic H\"older spaces are defined for functions of $(t,x)\in \R^{N+1}$ that are assumed to be
anisotropically H\"older continuous, in the sense above, uniformly in time. Additional H\"older
regularity in the direction of the drift $Y$ is also assumed. By means of the H\"ormander
condition, it is then possible to infer H\"older regularity jointly with respect to all variables.
\end{itemize}}

Let us first recall that the parabolic H\"ormander condition \eqref{horcon} is equivalent to the
well-known Kalman rank condition for controllability in linear systems theory (cf., for instance,
\cite{MR2791231}). Also, it was shown in \cite{lanpol} that \eqref{horcon} is equivalent to $B$
having the block-form
\begin{equation}\label{B}
  B=\begin{pmatrix}
 \ast & \ast & \cdots & \ast & \ast \\
 B_1 & \ast &\cdots& \ast & \ast \\
 0 & B_2 &\cdots& \ast& \ast \\
 \vdots & \vdots &\ddots& \vdots&\vdots \\
 0 & 0 &\cdots& B_{\rr}& \ast
  \end{pmatrix}
\end{equation}
where the $*$-blocks are arbitrary and $B_j$ is a $(d_{j-1}\times d_j)$-matrix of rank $d_j$ with
\begin{equation}
  d\equiv d_{0}\geq d_1\geq\dots\geq d_{\rr}\geq1,\qquad \sum_{i=0}^{\rr} d_i=N.
\end{equation}
The block decomposition \eqref{B} induces naturally an anisotropic norm for $x\in\rn$ defined as
\begin{equation}\label{e7}
 |x|_B:=\sum_{j=0}^{\rr}\, \sum_{i=\bar{d}_{j-1}+1}^{\bar{d}_j} |x_{i}|^{\frac{1}{2j+1}}, \qquad
 {\bar{d}_j:= \sum_{k=0}^j d_k}.
\end{equation}

\begin{definition}[\bf Anisotropic H\"older spaces]\label{defanis} Let $\av\in\,]0,3]$.
\begin{itemize}
\item The anisotropic H\"older {norms} on $\R^{N}$ are
defined recursively as 
\begin{align}
 \|g\|_{\Canis{\av}}:= \begin{cases} 
 \|g\|_{L^{\infty}}+ \sup\limits_{x,y\in\rn} \frac{|g(x) - g(y)|}{|x - y|^{\av}_B}, &\av\in\,]0,1],\\
 \label{and1}
 \|g\|_{L^{\infty}}+\|\nabla_{\!d}g\|_{\Canis{\av-1}}+
 \sup\limits_{(x,\y)\in\R^{N}\times\R^{N-d}}
 \frac{|g(x+(0,\y)) - g(x)|}{|(0,\y)|^{\av}_B},
 &\av\in\,]1,3].
 \end{cases}
 \end{align}
 {
 We denote by $\Canis{\alpha}$ the set of functions $g:\rn\rightarrow\R$ such that the norm $ \|g\|_{\Canis{\av}}$ is finite. Set also $\Canis{0}:=C_b$.}
 \item  {For $\ttau>0$, 
 the anisotropic H\"older norms on $\St{\ttau}$ are
\begin{align}
\text{(weighted)}\quad \| f \|_{\Linf{\ttau,\gamma}{\alpha}} &:= \sup_{s\in\,]0,\ttau[} \Big(
(\ttau-s)^{\gamma} \| f(s,\cdot) \|_{\Canis{\alpha}} \Big),\quad \gamma\in[0,1[,\qquad\\
\text{(non-weighted)}\quad \| f \|_{\Linf{\ttau}{\alpha}} &:= \| f
\|_{\Linf{\ttau,0}{\alpha}}.\qquad\qquad\qquad
\end{align}
We denote by $\Linf{\ttau,\gamma}{\alpha}$ and $\Linf{\ttau}{\alpha}$ the set of measurable
functions $f:\St{\ttau}\rightarrow\R$ such that the norms $ \| f \|_{\Linf{\ttau,\gamma}{\alpha}}
$ and $ \| f \|_{\Linf{\ttau}{\alpha}} $, respectively, are finite. }
\end{itemize}
\end{definition}

{Before introducing the intrinsic H\"older spaces, we recall the following}
\begin{definition}
[\bf {Lie derivative}] \label{def:Lie_ae_bis}
{For $\ttau>0$ and $\av \in\,]0,2]$ we set}
\begin{align}
  \| f \|_{C^{\av}_{Y,\ttau}}&:= \sup_{(\tau,x)\in\St{\ttau}}\sup_{s\in\,]0,\ttau[}\frac{\left|f(s,e^{(s-\tau) B}x) -
 f(\tau,x)\right|}{|s-\tau|^{\frac{\av}{2}}}.
\end{align}
Moreover, we say that {\it $f$ is a.e. Lie-differentiable along $Y$ on $\St{\ttau}$} if there
exists $F\in L_{\text{\rm loc}}^{1}(]0,\ttau[;C_b(\rn))$ such that
 \begin{equation}
 f(s,e^{(s - \tau) B}x) = f(\tau,x) + \int_{\tau}^{s}  F(r, e^{(r - \tau) B} x )d r, \qquad
 (\tau,x)\in\St{\ttau},\  s\in\,]0,t[.
\end{equation}
In that case, we set $Yf=F$ and call it an {\it a.e. Lie derivative of $f$ on $\St{\ttau}$}.
\end{definition}

\begin{definition}[\bf Intrinsic H\"older spaces]\label{def:Lie_ae}
{Let $\ttau>0$. The {\it intrinsic H\"older norms on $\St{\ttau}$} are defined recursively as:}
\begin{align}
  \|f\|_{\Cint{\ttau}{\av}}:=\begin{cases}
  \|f\|_{\Linf{\ttau}{\alpha}}
  +\|f\|_{C^{\av}_{Y,\ttau}},&\hspace{0pt}\quad \av\in\,]0,1],\\
 \|f\|_{\Linf{\ttau}{\alpha}}+\|\nabla_{\!d}f\|_{\Cint{\ttau}{\av-1}}+\|f\|_{C^{\av}_{Y,\ttau}},&\hspace{0pt}\quad \av\in\,]1,2],\\
\|f\|_{\Linf{\ttau}{\alpha}}+\|\nabla_{\!d}f\|_{\Cint{\ttau}{\av-1}}+\| {Yf}
\|_{\Linf{\ttau}{\av-2}}, &\hspace{0pt}\quad \av\in\,]2,3].
  \end{cases}
\end{align}
{For $\av\in\,]0,3]$, we denote by $\Cint{\ttau}{\av}$ the set of functions
$f:\St{\ttau}\rightarrow\R$ such that the norm $ \|f\|_{\Cint{\ttau}{\av}}$ is finite.}
\end{definition}

\begin{remark}[\bf Intrinsic vs anisotropic spaces]\label{rem:C0alpha_intrins_b}
Obviously, the intrinsic space $\Cint{\ttau}{\av}$ is strictly included in the anisotropic space
$\Linf{\ttau}{\av}$. {Note that, for $\alpha\in\,]0,1]$, the addition in the
$\Cint{\ttau}{\av}$-norm of the term $\|f\|_{C^{\av}_{Y,\ttau}}$ yields H\"older regularity
jointly in the time and space variables: in particular, it is standard to show that if $f\in
\Cint{\ttau}{\av}$ then
\begin{equation}
|f(\tau,x) - f(s,y) | \leq C \| f \|_{\Cint{\ttau}{\av}} \big(  |\tau - s|^{\frac{\alpha}{2}} + |x
- e^{(\tau - s)} y|^{\alpha}_B  \big), \qquad (\tau,x),(s,y) \in \St{\ttau}.
\end{equation}}
\end{remark}

\begin{remark}[\bf Intrinsic Taylor formula]\label{rem:taylor}
\label{rem:C0alpha_intrins} For $\alpha\in\,]0,1]$, the intrinsic spaces $\Cint{\ttau}{\av}$ and
$\Cint{\ttau}{1+\av}$ are equivalent to those in \cite{pagpaspig}. However,
$\Cint{\ttau}{2+\av}$ 
is slightly weaker than the one in \cite{pagpaspig} in that the Lie derivative $Yf$ is not
required to be in $\Cint{\ttau}{\av}$ but only in $\Linf{\ttau}{\av}$. This difference is dictated
by our assumptions on the coefficients that are merely measurable in the temporal variable: so,
for a solution $u$ of \eqref{eq:Cauchy_prob}, one may expect that $Yu$ exists in the strong sense
but, in general, is not more than bounded in the $Y$-direction. {Despite this,
$\Cint{\ttau}{2+\av}$ in Definition \ref{def:Lie_ae} is still strong enough to prove the following
{\it intrinsic Taylor formula} as in \cite[Theorem 2.10]{pagpaspig}:
%
if $f\in\Cint{\ttau}{2+\av}$ then 
\begin{equation}
| f(\tau,x) - T_2f\big(s,y; x - e^{(\tau-s)B} y\big)  |  \lesssim  \| f \|_{\Cint{\ttau}{2+\av}}
\big( |\tau-s| + | x - e^{(\tau-s)B} y |^{2+\alpha}_B \big), \qquad (\tau,x),(s,y)\in \St{\ttau},
\label{eq:taylor_formula_intrins}
\end{equation}
where $T_2 f$ is the second order intrinsic Taylor polinomial
\begin{equation}
T_2f (s,y; z  ) = f(s,y) +  \sum_{i=1}^d z_i \,  \partial_i f(s,y) + \frac{1}{2} \sum_{i,j=1}^d
 z_i z_j\,  \partial_{ij} f(s,y).
\end{equation}
Furthermore, by adding the term $(\tau-s)Yu(s,y)$ to $T_2$, the estimate can be improved by
obtaining a term of order $o(|\tau-s|)$, as $\t-s\to0$, in place of $|\tau-s|$ in
\eqref{eq:taylor_formula_intrins}.}

It is worth noting that, for $f$ in the anisotropic space $\Linf{t}{\av}$, {\it estimate
\eqref{eq:taylor_formula_intrins} generally holds only for $s=\tau$}: this is the best result that
one can deduce from Schauder estimates of anisotropic-to-anisotropic type.
\end{remark}


\subsection{Main result}\label{sec:main_result}
We remark that if $u\in \Linf{T}{2+\av}$ or $u\in \Cint{T}{2+\av}$ then the derivatives
$\p_{x_{i}}u,\p_{x_{i}x_{j}}u$ exist in the classical sense only for $1\le i,j\le d$. Indeed, $u$
is not regular enough to support even the first-order derivatives $\p_{x_{i}}$ for $d<i\le N$ and
the full gradient $\nabla u$ appearing in the drift of $\L$ must be interpreted in a suitable way.
{Thus, in accordance with the intrinsic H\"older spaces defined above, 
we employ the following natural definition of solution to the kinetic equation.}

{\begin{definition}[\bf Strong Lie solution]\label{def:solution_eq} \label{solint}
Let $f\in L_{\text{\rm loc}}^{1}(]0,T[;C_{b}(\rn))$. 
A solution to equation
\begin{equation}\label{ae1}
  \A u+ Yu = f\ \text{on }\mathcal{S}_{T}
\end{equation}
is a continuous function $u:\St{T}\rightarrow\R$ such that there exist
$\nabla_{d}u,\nabla^2_{d}u\in
L_{\text{\rm loc}}^{1}(]0,T[;C_{b}(\rn))$ and $Yu=f-\A u$ 
in the sense of Definition \ref{def:Lie_ae_bis}.

For $g\in C(\rn)$, a solution to the Cauchy problem \eqref{eq:Cauchy_prob} is a solution $u$ to
\eqref{ae1}, which can be extended continuously to $]0,T]\times \rn$ with $u(T,\cdot)= g$.
\end{definition} }

\begin{theorem}\label{th:main}
Let 
assumptions 
{\bf (H.1)} and {\bf (H.2)} be in force and assume also the coefficients $a_{ij},a_{i},a$ of $\L$
be in $\Linf{\TT}{\aa}$ for some $\aa\in\,]0,1]$. Then, for any $\av\in\,]0,\aa[$, $g\in
\Canis{\beta}$ with $\beta\in [0,2+\av]$ and
$f\in\Linf{T,\gamma}{\av}$ with $\gamma\in[0,1[$, there exists a unique 
bounded {strong Lie} solution $u$ to the Cauchy problem \eqref{eq:Cauchy_prob}. Furthermore, we have 
\begin{equation}\label{eq:main_res}
 \|{u}\|_{\Cint{t}{2+\av}} \leq C \left((T-t)^{-\frac{2+\av - \beta}{2}} \| g \|_{\Canis{\beta}} +
( T-t )^{-{\gamma}} \| f \|_{\Linf{T,\gamma}{\av}}\right), \qquad  0< t < T,
\end{equation}
where $C$ is a positive constant which depends only on $\TT,B,\aa,\av,\beta,\gamma$ and on the
norms {$\Linf{\TT}{\aa}$} of the coefficients of $\A$. {In particular, $C$ does not depend on
$t$.}
\end{theorem}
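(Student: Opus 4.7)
The plan is to adapt the Duhamel approach sketched in the introduction, leveraging the fundamental solution $\sol=\sol(t,x;s,y)$ of $\L$ constructed in \cite{lucertini2022optimal} together with its sharp anisotropic-to-intrinsic pointwise estimates. I would set
$$
u(t,x):=\int_{\R^N}\sol(T,y;t,x)\,g(y)\,dy\;-\;\int_t^T\!\!\int_{\R^N}\sol(s,y;t,x)\,f(s,y)\,dy\,ds
=:u_g(t,x)-u_f(t,x),
$$
and split the theorem into four steps: (i) show that $u_g,u_f$ are well defined and continuous on $\St{T}$; (ii) verify that $u$ is a strong Lie solution in the sense of Definition \ref{solint}, i.e.\ that $\nabla_d u,\nabla_d^2u$ exist pointwise and $Yu=f-\Ac u$ in the a.e.\ Lie sense; (iii) prove the Schauder estimate \eqref{eq:main_res}; (iv) derive uniqueness.

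Step (iii) is the heart of the argument and would be handled by estimating $\|u_g\|_{\Cint{t}{2+\av}}$ and $\|u_f\|_{\Cint{t}{2+\av}}$ separately. For $u_g$, the heat-kernel-type estimates on $\sol$ and its derivatives from \cite{lucertini2022optimal} yield singular kernels in $(T-t)$ whose order is offset by the anisotropic H\"older regularity of $g$; the exact exponent $\frac{2+\av-\beta}{2}$ emerges from a dyadic interpolation, using the cancellation $\int\sol(T,y;t,x)\,dy=1$ to subtract values of $g$ when $\beta<1$, and an intrinsic Taylor expansion (in the spirit of Remark \ref{rem:C0alpha_intrins}) in the range $\beta\geq1$ to transfer derivatives onto $g$. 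For $u_f$, one integrates the kernel estimates against $(s-t)^{-\gamma}\|f(s,\cdot)\|_{\Canis{\av}}$ over $[t,T]$: the resulting Beta-type integrals produce the factor $(T-t)^{-\gamma}$, while the full intrinsic $(2+\av)$-regularity comes directly from the parametrix-based bounds of \cite{lucertini2022optimal}. In both pieces, one must separately control the three seminorms that build up $\Cint{t}{2+\av}$: the $\Linf{t}{\av}$-norm of $\nabla_d^2u$, the $\Linf{t}{\av}$-norm of $Yu$, and the intrinsic $C^{\av}_{Y,t}$-seminorm, each requiring a different decomposition of the integrand along the flow of $Y$.

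Step (ii) is the most delicate bookkeeping step. Once $u$ is known to lie in $\Cint{t}{2+\av}$, differentiating under the integral sign in the spatial directions is straightforward thanks to the kernel estimates, but recovering the a.e.\ Lie derivative $Yu$ requires a careful limiting argument along the integral curves $s\mapsto e^{(s-\tau)B}x$, in which one uses that $(\Ac+Y)\sol(\cdot,\cdot;s,y)=0$ pointwise (from the parametrix construction in \cite{lucertini2022optimal}) together with dominated convergence. The attainment of the terminal datum as $t\to T^-$ is a standard approximate-identity argument. Finally, uniqueness (step (iv)) follows from applying \eqref{eq:main_res} to the difference of two solutions with zero data.

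I expect the main obstacle to be producing the sharp blow-up exponent $\frac{2+\av-\beta}{2}$ uniformly across the entire range $\beta\in[0,2+\av]$: the endpoint $\beta=2+\av$ (no blow-up) and the endpoint $\beta=0$ (maximum blow-up) require opposite strategies --- one transfers all derivatives onto $g$, the other uses only the $L^1$-cancellation of $\sol$ --- while the intermediate regime demands a carefully calibrated interpolation that keeps the constant $C$ in \eqref{eq:main_res} independent of $t$, as required by the statement. The subtle interplay between the anisotropic norm used for the data and the intrinsic norm used for the solution, mediated by the heat-kernel estimates of \cite{lucertini2022optimal}, is precisely what makes this anisotropic-to-intrinsic estimate more delicate than the intrinsic-to-intrinsic theory.
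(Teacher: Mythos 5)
Your overall strategy---Duhamel representation via the fundamental solution $p=\param+\Phi$ of \cite{lucertini2022optimal}, separate treatment of the $g$-convolution and $f$-convolution, polynomial subtraction for the $g$-term, and Beta/hypergeometric identities to extract the sharp blow-up exponent---is precisely the route the paper takes. One structural detail you gloss over is worth flagging: the available pointwise estimates from \cite{lucertini2022optimal} are stated \emph{separately} for $\param$ and $\Phi$, with genuinely different singularity profiles ($\Phi$ carries an extra $(\t-t)^{\aa/2}$ gain, while $\p_i\param$ satisfies an auxiliary a.e.\ Lie PDE, cf.\ Lemma~\ref{lem:deriv_param_sol1}, that is used as a cancellation identity when bounding the intrinsic seminorm of $V_{\param,f}$). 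So in practice one must split $V_{p,f}=V_{\param,f}+V_{\Phi,f}$ and run distinct arguments for each piece, which is why the paper devotes separate proofs to them in Proposition~\ref{prop:holder_bounds_V_param_Phi}. Also, for the terminal-datum piece $V_g$ the polynomial you subtract should be the \emph{anisotropic} Taylor polynomial of $g$ (a function of $x$ only) centred at the foot of the flow $\bar x=e^{(T-t)B}x$, not an intrinsic space-time expansion; the paper captures this in Remark~\ref{rem:g_reduction}, where the key observation is that $\tilde T_{\beta,\bar x}g$ lies in $\Cint{T}{2+\av}$ with norm $\lesssim\|g\|_{\Canis{\beta}}$ uniformly across $\beta\in[0,2+\av]$, so the reduction to the Taylor remainder reuses the $f$-estimates already proved.

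The one step that, as written, would actually fail is your step (iv). You propose to deduce uniqueness by ``applying \eqref{eq:main_res} to the difference of two solutions with zero data,'' but \eqref{eq:main_res} is established only a posteriori for the Duhamel-constructed candidate $u$, not a priori for an arbitrary bounded strong Lie solution. Given two bounded strong Lie solutions $u_1,u_2$ with the same data, nothing so far entitles you to apply \eqref{eq:main_res} to $u_1-u_2$: that would require already knowing that every bounded strong Lie solution with zero data coincides with the Duhamel representation of zero---i.e., exactly the uniqueness claim. The paper closes this loop with a classical maximum-principle/Green's-identity argument in the degenerate setting, citing Friedman's book; you need a comparable independent argument rather than the Schauder estimate itself.
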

We illustrate the Schauder estimate through particular instances; by linearity, we can treat the
cases $g=0$ and $f=0$ separately:
\begin{itemize}
  \item {\bf [Case $g=0$]} Estimate \eqref{eq:main_res} reads as
\begin{equation}\label{eq:main_res_g_zero}
 \|{u}\|_{\Cint{t}{2+\av}} \leq C ( T-t )^{- {\gamma}} \| f \|_{\Linf{T,\gamma}{\av}},\qquad 0<t<T.
\end{equation}
In particular, if $\gamma = 0$ then $f$ is bounded and \eqref{eq:main_res_g_zero} holds true up to
the boundary $t=T$:
\begin{equation}\label{eq:main_res_g_zero_b}
 \|{u}\|_{\Cint{T}{2+\av}} \leq C\,
 \| f \|_{\Linf{T}{\av}}.
\end{equation}

  \item {\bf [Case $f=0$]} We have two extreme cases:
\begin{itemize}
  \item[$\diamond$] If $\b=0$, that is $g$ is only bounded and continuous, then
  the solution has the same {explosion behavior} of the fundamental solution as $t\to T^{-}$ (cf. \cite{lucertini2022optimal}), which
  is
    $$\|{u}\|_{\Cint{t}{2+\av}} \leq  C (T-t)^{-\frac{2+\av}{2}} \| g \|_{L^{\infty}}.$$
  \item[$\diamond$] If $\b=2+\a$ then the solution is $(2+\av)$-H\"older continuous up to the boundary
\begin{equation}\label{eq:main_res_eps_zero_unif}
   \|{u}\|_{\Cint{t}{2+\av}} \leq C \| g \|_{\Canis{2+\a}}.
\end{equation}
\end{itemize}
\end{itemize}
Estimate \eqref{eq:main_res_eps_zero_unif} entails a regularity result along $Y$ at the boundary
$t=T$, which is reported in the following
\begin{corollary}\label{cor:regY}
Let the assumptions of Theorem \ref{th:main} be in force with $\g=0$. 
The solution $u$ to the Cauchy problem \eqref{eq:Cauchy_prob} satisfies
\begin{equation}
\| u(t, e^{B(T-t)} \cdot) - u(T,\cdot)   \|_{L^{\infty}} \leq C (T-t)^{\frac{\beta}{2}}, \qquad
0<t < T,
\end{equation}
if $\beta \in\,]0,2]$ and
\begin{equation}
\| u(t, e^{B(T-t)} \cdot) - u(T,\cdot)  + Yu (T,\cdot)(T-t)  \|_{L^{\infty}} \leq C
(T-t)^{\frac{\beta-2}{2}},  \qquad  0<t < T,
\end{equation}
if $\beta\in\,]2,2+\aa[$.
\end{corollary}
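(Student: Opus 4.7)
The plan is to derive both bounds from the a.e.\ Lie derivative representation of $u$ (Definition \ref{def:Lie_ae_bis}) combined with the sharp Schauder estimate \eqref{eq:main_res}. Since $u$ is a strong Lie solution to \eqref{eq:Cauchy_prob} with continuous extension $u(T,\cdot) = g$, passing to the limit $s \to T^{-}$ in Definition \ref{def:Lie_ae_bis} with $(\tau, x) = (t_0, y_0)$ yields
\begin{equation*}
u(T, e^{(T-t_0)B} y_0) - u(t_0, y_0) \,=\, \int_{t_0}^T Y u(r, e^{(r-t_0)B} y_0)\, dr,
\end{equation*}
which, up to the change of variable $y_0 \leftrightarrow e^{-(T-t)B}y$, is precisely the quantity appearing in both claims of the Corollary (assuming the Corollary's exponent convention $e^{B(T-t)}$ is read consistently with the natural $Y$-flow).

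For the first case $\beta \in\,]0, 2]$, I would apply \eqref{eq:main_res} with $\gamma = 0$ and any $\av \in\,]0, \min(\aa, \beta)[$ to obtain
\begin{equation*}
\| Y u(r, \cdot) \|_{L^{\infty}} \,\leq\, C\, (T-r)^{-\frac{2+\av-\beta}{2}} \| g \|_{\Canis{\beta}} + C\, \| f \|_{\Linf{T}{\av}}, \qquad r \in\,]0, T[,
\end{equation*}
which is integrable on $\,]t_0, T[$ thanks to $\av < \beta$ and, upon integration, produces a bound of order $(T-t_0)^{(\beta-\av)/2}$. The sharp $(T-t_0)^{\beta/2}$ rate is then recovered by sending $\av \downarrow 0$, justified by tracking the $\av$-dependence of the Schauder constant inherited from the parametrix construction in \cite{lucertini2022optimal}; alternatively, one can argue directly via the fundamental-solution estimates, bounding $u(t_0, \cdot) - g$ using the anisotropic H\"older regularity of $g$.

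For the second case $\beta \in\,]2, 2+\aa[$, I would take $\av = \beta - 2 \in\,]0, \aa[$, so that \eqref{eq:main_res} reduces to the uniform estimate \eqref{eq:main_res_eps_zero_unif}: $\| u \|_{\Cint{T}{\beta}} \leq C$, with $Y u \in \Linf{T}{\beta-2}$ bounded on $\St{T}$ and admitting a trace $Y u(T, \cdot) \in \Canis{\beta-2}$ as $t \to T^{-}$. Subtracting the linear term $(T-t_0)\, Y u(T, e^{(T-t_0)B} y_0)$ from the Lie identity, the integrand becomes $Y u(r, e^{(r-t_0)B} y_0) - Y u(T, e^{(T-t_0)B} y_0)$, which I would control by splitting into (a) a spatial oscillation estimated via $Y u(r, \cdot) \in \Canis{\beta-2}$ together with the anisotropic scaling of $|e^{(r-t_0)B} y_0 - e^{(T-t_0)B} y_0|_B$ along the $Y$-flow, and (b) a time oscillation extracted from the equation $Y u = f - \A u$ and the joint space-time intrinsic H\"older regularity of $u \in \Cint{T}{\beta}$ encoded in Remark \ref{rem:C0alpha_intrins_b}. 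Integration then yields the claimed $(T-t_0)^{(\beta-2)/2}$ rate.

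The main obstacle is achieving the sharp exponents on both sides. In the first case, the naive Lie-integration produces only $(T-t_0)^{(\beta-\av)/2}$, and closing the $\av/2$ gap requires either uniform control on the divergence rate of the Schauder constant as $\av \to 0$ or a direct fundamental-solution argument. In the second case, the central difficulty is the time-oscillation of $Y u$: since the coefficients of $\A$ are only measurable in time, $\A u$ is not a-priori continuous in time, and one must exploit the intrinsic space-time H\"older structure of $u$ rather than the pointwise form of $\A u = f - Y u$ to extract the required time regularity.
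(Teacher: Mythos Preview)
The paper does not supply an explicit proof of this Corollary; it is stated as a direct consequence of the Schauder estimate \eqref{eq:main_res}. Your Lie-integral strategy is the natural one, but the first case contains a genuine gap.

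For $\beta\in\,]0,2]$, controlling $\|Yu(r,\cdot)\|_{L^{\infty}}$ through the full $\Cint{t}{2+\av}$-norm of \eqref{eq:main_res} gives only the blow-up $(T-r)^{-(2+\av-\beta)/2}$, hence after integration $(T-t_0)^{(\beta-\av)/2}$. Your proposed remedy of sending $\av\downarrow 0$ does not close this: the constant $C$ in Theorem \ref{th:main} depends on $\av$ through the parametrix estimates of \cite{lucertini2022optimal} and the special-function integrals in Section \ref{sec:proof}, and there is no uniform control as $\av\to 0$. The sharp exponent comes not from the full Schauder norm but from the \emph{pointwise} second-derivative bounds already proved in the paper: since $Yu=f-\A u$ and the coefficients of $\A$ are bounded, one needs only
\[
\|\nabla^2_d u(r,\cdot)\|_{L^{\infty}}\le C\,(T-r)^{-\frac{(2-\beta)\vee 0}{2}},
\]
which is exactly \eqref{eq:bound_der_second_g} for the $V_g$ piece, while the $V_{\param,f}$ and $V_{\Phi,f}$ pieces are uniformly bounded by \eqref{eq:bound_der_second} with $\gamma=0$. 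Integrating $(T-r)^{-(2-\beta)/2}$ over $[t_0,T]$ then yields $(T-t_0)^{\beta/2}$ directly. This is what your vaguely indicated ``direct fundamental-solution argument'' must become, and it is the only route that gives the stated rate.

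For $\beta\in\,]2,2+\aa[$, your choice $\av=\beta-2$ and the resulting uniform bound $\|u\|_{\Cint{T}{\beta}}\le C$ are correct starting points. The obstacle you flag---time-oscillation of $Yu$---is real and not resolved by your sketch: with $f$ and the coefficients of $\A$ merely measurable in time, $Yu(T,\cdot)$ is not a priori defined pointwise, so both the meaning of the statement and the estimate of $Yu(r,\cdot)-Yu(T,\cdot)$ require care. The paper does not spell this out either; one way through is to exploit that $\nabla^2_d u\in\Cint{T}{\beta-2}$ (so $\|\nabla^2_d u\|_{C^{\beta-2}_{Y,T}}<\infty$) together with the intrinsic Taylor expansion of Remark \ref{rem:C0alpha_intrins}, interpreting $Yu(T,\cdot)$ as the appropriate limit along the flow.
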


\begin{remark}
Recall that (see Remark \ref{rem:taylor}) Theorem  \ref{th:main} does not imply, under the assumptions therein, that the Lie derivative $Yu$ is jointly continuous in time and space variables. However, under the additional assumption that $f$ and  $a_{ij},a_{i},a$ are continuous on $\St{T}$, it can be seen by Definitions \ref{def:Lie_ae} and \ref{solint}, together with Remark \ref{rem:C0alpha_intrins_b}, that $Yu$ turns out to be continuous on $S_T$. In particular, $u$ is Lie differentiable along $Y$ everywhere on $\St{T}$, and thus equation \eqref{ae1} is satisfied pointwise everywhere on $\St{T}$.
\end{remark}

\subsection{Comparison with the literature}\label{revlit}


Global {{\it anisotropic-to-anisotropic}} Schauder estimates were proved by Lunardi
\cite{MR1475774}, Lorenzi \cite{MR2136978}, Priola \cite{MR2534181}, Menozzi \cite{MR4334312}
under different assumptions on the coefficients,
stronger or equivalent to ours. These estimates read as follows: if $u$ is a 
solution of the Cauchy problem \eqref{eq:Cauchy_prob} 
then
\begin{equation}\label{ae4}
   \|{u}\|_{\Linf{T}{2+\av}} \leq C \left(\| g \|_{\Canis{2+\av}} +
   \| f \|_{\Linf{T}{\av}}\right).
\end{equation}
Estimate \eqref{ae4} is similar to \eqref{eq:main_res} for the particular choice $\b=2$ and
$\g=0$; however, by Remark \ref{rem:C0alpha_intrins_b}, estimate \eqref{ae4} is weaker than
\eqref{eq:main_res} due to the strict inclusion of intrinsic into anisotropic spaces. Moreover,
the above mentioned papers assume a smooth datum, $g\in \Canis{2+\av}$, missing the smoothing
effect of the equation, which is well-known in the uniformly parabolic case (cf.
\cite{MR1184023}). On the other hand, the class of equations considered by Menozzi
\cite{MR4334312} allows for more general drift terms. {Zhang \cite{zhang2021cauchy} proved general
estimates in the context of Besov-Holder spaces, which, at order two read as
\begin{equation}\label{eq:zhang_besov}
\|{u}\|_{\Linf{T}{2+\av}} \leq C (T-t)^{-\frac{2+\av - \beta}{2}} \| g \|_{\Canis{\beta}}.
\end{equation}
Once more, this estimate is proved for the anisotropic H\"older norm and thus only catches the
smoothing effect of the kernel with respect to the spatial variables. Also, \eqref{eq:zhang_besov}
is proved in the case of $\Ac$ with constant coefficients.}

{Global intrinsic-to-intrinsic Schauder estimates} were obtained by Imbert and Mouhot
\cite{MR4275241} for $B$ as in \eqref{B1}, assuming coefficients in {$\Cint{T}{\av}$}. As noticed
in \cite{dong2022global}, the results in \cite{MR4275241} do not cover the case of some elementary
smooth functions, for instance, $d=1$ and $f(t,x_{1},x_{2})=\sin x_{2}$.
We also mention that {\it interior estimates} were obtained by Manfredini
\cite{MR1751429}, Di Francesco and Polidoro
\cite{difpol}, Henderson and Snelson \cite{MR4072211} and very recently by Polidoro, Rebucci and
Stroffolini \cite{MR4405316} for operators with Dini continuous coefficients.

Global anisotropic-to-intrinsic Schauder estimates 
were obtained by Biagi and Bramanti \cite{biagi2022schauder} under the additional assumption that
the $\ast$-blocks in \eqref{B} are null and therefore $\L$ is
homogeneous w.r.t. a suitable family of dilations: if $u$ is a 
solution to $\L u=f$ then
\begin{align}\label{ae6}
 {\|\nabla^2_{d}  u\|_{L^{\infty}_{T}(\Canis{\av})} }+\| Y u\|_{L^{\infty}_{T}(\Canis{\av})}
  +
  {\|\nabla_{d} u\|_{\Cint{T}{\av}}  + \|u\|_{\Cint{T}{\av}} } \leq C\left(\| u\|_{L^\infty_T}+ \|f\|_{L^{\infty}_{T}(\Canis{\av})}\right).
\end{align}
Estimate \eqref{ae6} is weaker than \eqref{eq:main_res} because
 the norm on the l.h.s. of \eqref{ae6} is smaller than the norm in $\Cint{T}{2+\av}$: {the H\"older semi-norms $\|\nabla_d u\|_{C^{1+\av}_{Y,T}}$ and $\|\nabla^2_d u\|_{C^{\av}_{Y,T}}$ along the direction $Y$ are missing.
} Moreover, the optimal regularity for the second derivatives $\nabla^2_d u$ is obtained only
\emph{locally}.

The closest results to ours were recently obtained by Dong and Yastrzhembskiy
\cite{dong2022global} for the Langevin operator in $\R^{2d+1}$, that is for $B$ in \eqref{B} in
the particular form
\begin{equation}\label{B1}
 B=\begin{pmatrix}
    0 & 0 \\
    I_{d} & 0 \
  \end{pmatrix},
\end{equation}
where $I_{d}$ is the $d\times d$ identity matrix and for the Cauchy problem with null-terminal
datum, $g= 0$. The techniques in \cite{dong2022global} are based on a kernel free approach
inspired by Campanato's ideas. Despite the proof is quite different, the estimates are very
similar to ours except they miss the optimal regularity along $Y$ of the first order derivatives
$\nabla_{d}u$: we remark that this piece of information is crucial to guarantee the validity of {
Taylor formulas like \eqref{eq:taylor_formula_intrins}. As already mentioned, the latter is a
basic tool to prove probabilistic results such as the {It\^o formula} (cf.
\cite{lanconelli2020local}), as well as analytical (cf. \cite{MR4275241}) and asymptotic results
(cf. \cite{MR3660883}).
}

\section{Proof of Theorem \ref{th:main}}\label{sec:proof}

The proof of Theorem \ref{th:main} relies on the results of \cite{lucertini2022optimal} where a
fundamental solution to the operator $\L=\Lcal$ was constructed in the form
\begin{equation}
p(t,x;T,y) = \param(t,x;T,y) + \Phi(t,x;T,y), \qquad 0< t <{T},  \quad  x,y \in\R^N,
\end{equation}
where:
\begin{itemize}
\item the function $\param$ is a so-called \emph{parametrix}, which is defined as
\begin{equation}\label{eq:def_parametrix}
 \param(t,x;s,y):=\G^{(s,y)}(t,x;s,y),\qquad   0< t<s <T, \quad x,y \in\R^N ,
\end{equation}
where, for $(\tau,v)\in\mathcal{S}_{T}$, we set
\begin{equation}\label{eq:param}
\G^{(\tau,v)}(t,x;s,y):=\gauss\big(\C ^{(\tau,v)}(t,s),y-e^{(s-t)B}x\big),\qquad    0< t<s < T,
\quad x,y \in\R^N,
\end{equation}
with
\begin{equation}
\gg(\mathcal{C},z):=
\frac{1}{\sqrt{(2\pi)^{N}\det \mathcal{C}}}e^{-\frac{1}{2}\<\mathcal{C}^{-1}z,z\>} ,
\end{equation}
and
\begin{align}\label{cao}
\C^{(\tau,v)}(t,s)&:= \int_{t}^{s} e^{(s-r) B} A^{(\tau,{v})} (r) e^{(s-r) B^*}d r , \\
\label{aao}
A^{(\tau,v)}(r)&:=\begin{pmatrix} A_0(r,e^{(r-\tau)B}v)& 0 \\ 0 & 0\end{pmatrix},\qquad
A_{0}=\(a_{ij}\)_{i,j=1,\dots,d};
\end{align}
\item the function $ \Phi(t,x;T,y)$ is a remainder enjoying suitable regularity estimates, which are recalled in Proposition \ref{lem:deriv_theta} below.
\end{itemize}
The strategy of our proof is to define a candidate solution to the Cauchy problem
\eqref{eq:Cauchy_prob} in the form
\begin{equation}\label{eq:repres_u}
 u(t,x):=\int_{\R^N} p(t,x;T,\y)g(\y)d\y   - \int_t^T\int_{\R^N} p(t,x;\t,\y)f(\t,\y)d\y d\t , \qquad (t,x)\in \St{T} ,
\end{equation}
and prove that $u$:
{(i) is the unique bounded solution to \eqref{eq:Cauchy_prob} 
and (ii)
satisfies the estimate \eqref{eq:main_res}.}

Note that $u$ in \eqref{eq:repres_u} can be written as

\begin{equation}\label{eq:repres_u_gzero}
 u(t,x)= 
 V_{g}(t,x) - V_{\param,f}(t,x) - V_{\Phi,f}(t,x)   , \qquad (t,x)\in \St{T} 
\end{equation}
with
\begin{equation}
V_{g}(t,x) = \int_{\R^N} p(t,x;T,\y)g(\y)d\y,
\end{equation}
and
\begin{equation}
V_{\param,f}(t,x): = \int_t^T\int_{\R^N} \param(t,x;\t,\y)f(\t,\y)d\y d\t, \qquad V_{\Phi,f}(t,x):
=  \int_t^T\int_{\R^N} \Phi(t,x;\t,\y)f(\t,\y)d\y d\t   .
\end{equation}
We now prove our main result, Theorem \ref{th:main}. The proof is based on the sharp regularity
estimates for $V_{g}$, $V_{\param,f}$ and $V_{\Phi,f}$ contained in Propositions
\ref{prop:holder_bounds_Vg} and \ref{prop:holder_bounds_V_param_Phi}.
\begin{proof}[Proof of Theorem \ref{th:main} (well-posedness of \eqref{eq:Cauchy_prob})]
The uniqueness of the solution follows from standard arguments: {we refer to
\cite{friedman-parabolic} for a detailed proof}.

We prove that $u$ as defined in \eqref{eq:repres_u} is a solution to the Cauchy problem
\eqref{eq:Cauchy_prob} {in the sense of Definition \ref{solint}.} The facts that 
{$\nabla_{d}u,\nabla^2_{d}u$} have the required regularity on $\St{T}$, and that $u$ can be
extended continuously to the closure of $\St{T}$ in a way that $u(T,\cdot)\equiv g$, are
straightforward consequences of the estimates of Propositions
\ref{prop:holder_bounds_V_param_Phi}-\ref{prop:holder_bounds_Vg} and of the Dirac delta property
of $p$. To prove that 
{$Yu=f-\A u$ 
in the sense of Definition \ref{def:Lie_ae_bis}}, we can consider separately, by linearity, two
cases. {We state here, once and for all, that all the applications of Fubini's theorem throughout
this proof are justified by the estimates of Propositions \ref{prop:holder_bounds_V_param_Phi} and
\ref{prop:holder_bounds_Vg}.}

\vspace{2pt}

\noindent\emph{Case $f\equiv 0$.}  We have
\begin{align}
u(s,e^{(s-t)B}x) - u(t,x) &= V_{g}(s,e^{(s-t)B}x) - V_{g}(t,x) = \int_{\R^N} \big(
p(s,e^{(s-t)B}x;T,\y) - p(t,x;T,\y)  \big) g(\y) \dd \y \intertext{($\Lcalp p(\cdot,\cdot;T,\y)
=0$ on $\St{T}$ in the sense of Definition \ref{def:solution_eq})} & = - \int_{\R^N} \int_t^s  \Ac
p(r,e^{(r-t)B}x;T,\y) \dd r\,  g(\y) \dd \y =  - \int_t^s  \Ac u(r,e^{(r-t)B}x)   \dd r,
\end{align}
where, in the last equality, we employed Fubini's theorem and
\eqref{eq:represent_deriv_potent_param_bis_g}-\eqref{eq:represent_deriv_potent_param_g} to move
the operator $\Ac$ out of the integral on $\rn$. \vspace{2pt}

\noindent\emph{Case $g\equiv 0$.} We have
\begin{align}
u(s,e^{(s-t)B}x) - u(t,x) &=  \underbrace{- \int_s^T \int_{\R^N} \big( p(s,e^{(s-t)B}x;\tau,\y ) -
p(t,x;\tau,\y) \big)f(\tau,\y) \dd \y \dd \tau}_{=:I}\\ &\quad  +\int_t^s \int_{\R^N}
p(t,x;\tau,\y) f(\tau,\y) \dd \y \dd \tau.
\end{align}
As $\Lcalp p(\cdot,\cdot;\tau,\y) =0$ on $\St{\tau}$ in the sense of Definition
\ref{def:solution_eq}, for any $\tau\in]s,T[$, we have
\begin{equation}
p(s,e^{(s-t)B}x;\tau,\y ) - p(t,x;\tau,\y) = - \int_{t}^s \Ac p (r,e^{(r-t)B}x;\tau,\y) \dd r.
\end{equation}
Therefore, by Fubini's theorem, we have
\begin{align}
I &= \int_t^s \int_s^T \int_{\R^N}\Ac p (r,e^{(r-t)B}x;\tau,\y) f(\tau,\y) \dd \y \dd \tau \dd r
\\ & =  \underbrace{\int_t^s  \int_r^T \int_{\R^N} \Ac p (r,e^{(r-t)B}x;\tau,\y) f(\tau,\y) \dd \y
\dd \tau \dd r }_{=: I_1} - \underbrace{\int_t^s \int_r^s \int_{\R^N}\Ac p (r,e^{(r-t)B}x;\tau,\y)
f(\tau,\y) \dd \y \dd \tau \dd r }_{=: I_2}.
\end{align}
By moving the operator $\Ac$ out of the integral in $\dd \y \dd \tau$, we obtain
\begin{equation}
I_1 = - \int_t^s \Ac u (r,e^{(r-t)B}x)  \dd r,
\end{equation}
and thus, to show $\Lcalp u =0$ on $\St{T}$, we need to prove that
\begin{equation}\label{eq:I2_result}
I_2 = - \int_t^s f (r,e^{(r-t)B}x)  \dd r + \int_t^s \int_{\R^N}  p(t,x;\tau,\y) f(\tau,\y) \dd \y
\dd \tau .
\end{equation}
By applying Fubini's theorem we obtain
\begin{equation}
I_2 =  \int_t^s \underbrace{ \int_t^{\tau} \int_{\R^N} \Ac p (r,e^{(r-t)B}x;\tau,\y) f(\tau,\y) \dd \y\, \dd r }_{=:J(\tau)}  \dd \tau .
\end{equation}
Now, for any $\eps \in\,]0,\tau - t[$, we can write
\begin{equation}
J(\tau) = \underbrace{\int_{\R^N} \int_t^{\tau - \eps} \Ac p (r,e^{(r-t)B}x;\tau,\y) f(\tau,\y)\dd
r \dd \y }_{=:J^{\eps}_1(\tau)}  + \underbrace{ \int_{\tau - \eps}^{\tau} \int_{\R^N} \Ac p
(r,e^{(r-t)B}x;\tau,\y) f(\tau,\y) \dd \y \, \dd r }_{=:J^{\eps}_2(\tau)}.
\end{equation}
As $\Lcalp p(\cdot,\cdot;\tau,\y) =0$ on $\St{\tau}$, we obtain
\begin{equation}
J^{\eps}_1(\tau) =-  \int_{\R^N} \big( p (\tau-\eps,e^{(\tau-\eps-t)B}x;\tau,\y) - p (t,x;\tau,\y)
\big) f(\tau,\y) \dd \y,
\end{equation}
and since $f(\tau,\cdot)$ is bounded and continuous, the Dirac delta property of $p$ yields
\begin{equation}\label{eq:J1eps_lim}
J^{\eps}_1(\tau) \to -  f (\tau,e^{(\tau-t)B}x)   +  \int_{\R^N}  p(t,x;\tau,\y) f(\tau,\y) \dd \y
,\qquad \text{as } \eps \to 0^+.
\end{equation}
On the other hand, the estimates of 
{Lemma \ref{lemm:potf} and of Proposition \ref{lem:deriv_theta}} yield
\begin{equation}
J^{\eps}_2(\tau) \to 0  ,\qquad \text{as } \eps \to 0^+.
\end{equation}
This and \eqref{eq:J1eps_lim}, by Lebesgue dominated convergence theorem, yield
\eqref{eq:I2_result}.
\end{proof}
\begin{proof}[Proof of Theorem \ref{th:main} (estimate \eqref{eq:main_res})]
As $u$ is a solution to \eqref{eq:Cauchy_prob}, $Yu =  f - \Ac u$ is a Lie derivative of $u$ on
$\St{T}$. Therefore, we have
{
\begin{equation}
  \| u \|_{\Cint{t}{2+\av}}   \leq\|u\|_{\Linf{t}{2+\av}}+  \|\nabla_d u\|_{\Cint{t}{1+\av}} +  \| \Ac u\|_{\Linf{t}{\av}} +   \| f\|_{\Linf{t}{\av}}   .
\end{equation}
Now we recall that, 
  for $\alpha\in]0,1]$, the intrinsic spaces $\Cint{\ttau}{\av}, \Cint{\ttau}{1+\av}$ are exactly equivalent to those in \cite{pagpaspig}. This is a consequence of the intrinsic Taylor formula of Theorem 2.10 in the latter reference, with $n=0,1$. In particular, we have
\begin{equation}
\|\nabla_d u\|_{\Cint{t}{1+\av}} \leq C (        \|\nabla_d u\|_{L^{\infty}_t}+  \| \nabla^2_d
u\|_{L^{\infty}_t}+  \|\nabla_d u\|_{C^{1+\av}_{Y,t}} +\|\nabla^2_d u\|_{C^{\av}_{Y,t}}
+\|\nabla^2_d u\|_{C^{\av}_{\nabla_d,t}}     ),
\end{equation}
where
\begin{equation}
\|\nabla^2_d u\|_{C^{\av}_{\nabla_d,t}}:= \sup_{(s,x)\in\St{t}}\sup_{h\in\R^{d}}
\frac{|f(s,x+(h,0)) - f(s,x)|}{|h|^{\av}}.
\end{equation}
Therefore, the estimates of Propositions \ref{prop:holder_bounds_V_param_Phi} and \ref{prop:holder_bounds_Vg} 
 yield
\begin{equation}\label{eq:bound_N1P}
\|\nabla_d u\|_{\Cint{t}{1+\av}}  \leq C \Big(  (T-t)^{-\frac{2+\av - \beta }{2}} \| g
\|_{\Canis{\beta}} +
( T-t )^{-{\gamma}} \| f \|_{\Linf{T,\gamma}{\av}} \Big). 
\end{equation}
{To obtain the same estimate for $\|u\|_{\Linf{t}{2+\av}}$, it is enough to prove it for
\begin{equation}
\sup_{s\in]0,t[}\ \sup\limits_{(x,\y)\in\R^{N}\times\R^{N-d}}
 \frac{|u(s,x+(0,\y)) - u(s,x)|}{|(0,\y)|^{2+\av}_B},
\end{equation}
which can be done by proceeding as in the proofs of \eqref{fcampidritti} and
\eqref{fcampidritti_g}: we omit the details for brevity.}
Furthermore, 
by the regularity assumptions on the coefficients of $\Ac$, 
we have
\begin{equation}
  \| \Ac u\|_{\Linf{t}{\av}} \leq C( \|u\|_{\Linf{t}{\av}}+  \|\nabla_d u\|_{\Cint{t}{1+\av}}) .
\end{equation}
Finally, we have $\| f\|_{\Linf{t}{\av}} \leq  (T-t)^{-\gamma} \| f \|_{\Linf{T,\gamma}{\av}} $
and thus \eqref{eq:main_res}.}

\vspace{2pt}

\end{proof}

The rest of the section is devoted to proving the regularity estimates employed in the proof of
Theorem \ref{th:main}. Hereafter, we denote, indistinctly, by $C$ any positive constant depending
at most on $T,B,\aa,\av,\beta,\gamma$ and on the $\Linf{T}{\aa}$ norms of the coefficients of
$\A$. We also introduce the following

{\begin{notation} For any $f=f(t,x;T,y)$ and  $i =1,\dots, N$, we set
\begin{equation}
\p_{i} f(t,x;T,y):= \p_{x_{i}}  f(t,x;T,y),
\end{equation}
and we adopt analogous notations for the higher-order derivatives. Thus, $\p_{i}$ always denotes a
derivative with respect to the first set of space variables. 
{Some caution is necessary when considering the composition of $f$ with a given function $F=F(x)$:
$\p_{i} f\big(t,F(x);T,y\big)$ denotes the derivative $\p_{z_i} f(t,z;T,y)|_{z=F(x)}$, and
similarly for higher order derivatives.} We also denote by $e_k$ the $k$-th element of the
canonical basis of $\rn$.
\end{notation}}

\subsection{Preliminaries results}
We first recall the useful result \cite[Lemma {3.3}]{lucertini2022optimal}:
\begin{lemma}\label{lem:deriv_param_sol1}
Let $(t,y)\in\mathcal{S}_{T}$. Then, for any $i=1,\dots, d$, the function $u:=\p_{i}
\param(\cdot,\cdot;t,y) $ is a strong Lie solution to the equation
\begin{equation}
\A^{{(t,y)}} u+ Yu = - {\sum_{j=1}^{d+d_1} b_{ji} \p_{j}\param (\cdot,\cdot;t,y)}  \ \text{on
}\mathcal{S}_{t},
\end{equation}
in the sense of {Definition \ref{def:solution_eq}.} 
\end{lemma}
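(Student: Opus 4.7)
The claim reduces to the commutation identity $[\p_i,\A^{(t,y)}+Y]=\sum_j b_{ji}\p_j$ applied to the parametrix, which itself satisfies the frozen equation $(\A^{(t,y)}+Y)\param(\cdot,\cdot;t,y)=0$. I would recall first that, by construction \eqref{eq:def_parametrix}--\eqref{eq:param}, $\param(\cdot,\cdot;t,y)$ is the explicit Gaussian fundamental solution at $(t,y)$ of the constant-in-$x$ frozen operator $\A^{(t,y)}+Y$, where $\A^{(t,y)}$ denotes $\A$ with all coefficients evaluated at $(t,y)$. Consequently, for every $r\in(0,t)$ the map $x\mapsto\param(r,x;t,y)$ is $C^{\infty}$ with Gaussian bounds on all derivatives, $\p_r\param$ exists in the classical sense on $\mathcal{S}_t$, and
\begin{equation}
\A^{(t,y)}\param(\cdot,\cdot;t,y)+Y\param(\cdot,\cdot;t,y)=0\qquad\text{on }\mathcal{S}_t
\end{equation}
holds classically and \emph{a fortiori} in the strong Lie sense.

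Next I would perform the commutator computation. Since $\A^{(t,y)}$ has coefficients independent of $x$, it commutes with $\p_i$. For $Y=\p_r+\sum_{j,k=1}^N b_{jk}x_k\p_j$, the Leibniz rule gives the pointwise identity
\begin{equation}
\p_i(Yf)=Y(\p_i f)+\sum_{j=1}^N b_{ji}\,\p_j f,
\end{equation}
i.e., $[\p_i,Y]=\sum_j b_{ji}\p_j$. Applying $\p_i$ classically to the frozen equation and using the smoothness of $\param$ to exchange $\p_i$ with $\p_r$ then yields
\begin{equation}
(\A^{(t,y)}+Y)\bigl(\p_i\param(\cdot,\cdot;t,y)\bigr)=-\sum_{j=1}^N b_{ji}\,\p_j\param(\cdot,\cdot;t,y).
\end{equation}
To complete the proof, it remains to observe that for $i\in\{1,\dots,d\}=\{1,\dots,d_0\}$ the block structure \eqref{B} forces $b_{ji}=0$ whenever $j>d+d_1$, since the nonzero entries in the first $d_0$ columns of $B$ live only in the first two block-rows. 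The sum therefore truncates at $j=d+d_1$, as stated.

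The main obstacle, and the only delicate point, is upgrading the classical identity above to the strong Lie form required by Definition \ref{def:solution_eq}. To this end, I would verify that the classical PDE $\p_r(\p_i\param)+\sum_{j,k}b_{jk}x_k\p_j(\p_i\param)=-\A^{(t,y)}\p_i\param-\sum_j b_{ji}\p_j\param$ integrates along the characteristic curves $r\mapsto e^{(r-\tau)B}x$ of $Y$ into the integral representation of Definition \ref{def:Lie_ae_bis}, thereby identifying its right-hand side with the a.e. Lie derivative $Y(\p_i\param)$. The required local-in-$r$ boundedness of $\p_i\param$, $\nabla_d\p_i\param$, $\nabla_d^{2}\p_i\param$ and of the right-hand side follows from the explicit Gaussian form of $\param$ and the standard pointwise bounds on its derivatives away from $r=t$, already established in \cite{lucertini2022optimal}.
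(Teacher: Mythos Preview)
Your argument is correct. Note, however, that the paper does not actually prove this lemma: it is simply quoted from \cite[Lemma~3.3]{lucertini2022optimal}, so there is no in-text proof to compare against. Your commutator approach---differentiate the frozen equation $(\A^{(t,y)}+Y)\param=0$, use that the frozen coefficients are $x$-independent so that $[\p_i,\A^{(t,y)}]=0$, compute $[\p_i,Y]=\sum_j b_{ji}\p_j$, and then invoke the block structure \eqref{B} to truncate the sum at $j=d+d_1$---is the natural and standard route, and is almost certainly how the cited proof proceeds. The upgrade to the strong Lie formulation is, as you say, automatic here because $\param$ is a smooth explicit Gaussian on $\mathcal{S}_t$.

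One small imprecision worth flagging: you describe $\A^{(t,y)}$ as ``$\A$ with all coefficients evaluated at $(t,y)$''. From \eqref{cao}--\eqref{aao} the frozen diffusion matrix at time $r$ is $A_0(r,e^{(r-t)B}y)$, i.e.\ the coefficients are frozen along the integral curve of $Y$ through $(t,y)$ rather than at a single space-time point, and only the second-order part $A_0$ enters (no $a_i$ or $a$). This does not affect your argument at all, since the only property you actually use is that the frozen coefficients are independent of the spatial variable $x$, which remains true.
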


In order to state the next preliminary lemma, we fix the following
\begin{notation}
Let $\iot=(\iot_1,\dots,\iot_N)\in\mathbb{N}_0^N$ be a multi-index. {Recalling 
\eqref{e7},} 
we define the {\it$B$-length} of $\iot$ as
\begin{equation}
[\iot]_B:= \sum_{j=0}^{\rr}(2j+1)\sum_{i=\bar{d}_{j-1}+1}^{\bar{d}_j}\iot_{i}. 
\end{equation}
\end{notation}
We have the following potential estimates, whose proof is identical to the one of
\cite[Proposition B.2]{lucertini2022optimal}.
\begin{lemma}\label{lemm:potf}
For any $\iot\in\N^N_0$ with $[\iot]_B\le 4$, we have
\begin{align}
\left|\int_{\R^N}\param(t,x;\t,\y)f(\t,\y) d\y\right|\leq&
\frac{C}{(T-\t)^{\g}}\|f\|_{\Linf{T,\gamma}{\av}} 
 \label{eq:potf}\\
\left|\int_{\R^N} \p^{\iot}_x \param(t,x;\t,\y)f(\t,\y) d\y\right|\leq&
\frac{C}{(T-\t)^{\g}(\t-t)^{\frac{[\iot]_B-\av}{2}}}\|f\|_{\Linf{T,\gamma}{\av}}\label{eq:derpotf}
\end{align}
for every $0<t<\t<T$ and $x\in\rn$.
\end{lemma}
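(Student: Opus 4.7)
I would start from the pointwise Gaussian bounds on $\param$ and its $x$-derivatives established in \cite{lucertini2022optimal}. Thanks to \textbf{(H.1)}--\textbf{(H.2)}, the covariance $\C^{(\t,\y)}(t,\t)$ defined in \eqref{cao}--\eqref{aao} is uniformly equivalent to the homogeneous ``frozen'' one associated with the $*$-less matrix of \eqref{B}, so there exists a Gaussian kernel $\Gb(t,x;\t,\y)$ (with inflated, $\y$-independent covariance) such that
\begin{equation}
|\p_x^{\iot}\param(t,x;\t,\y)| \leq C\, (\t-t)^{-\frac{[\iot]_B}{2}} \Gb(t,x;\t,\y), \qquad [\iot]_B \leq 4,
\end{equation}
together with the moment estimates $\int_{\rn} \Gb(t,x;\t,\y)\, d\y \leq C$ and $\int_{\rn} \Gb(t,x;\t,\y) |\y - e^{(\t-t)B}x|_B^{\av}\, d\y \leq C (\t-t)^{\frac{\av}{2}}$. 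These are the workhorses of the argument.

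Estimate \eqref{eq:potf} is then immediate: bounding $|f(\t,\y)| \leq \|f(\t,\cdot)\|_{L^{\infty}} \leq (T-\t)^{-\g} \|f\|_{\Linf{T,\g}{\av}}$ and integrating $\Gb$ in $\y$ produces the claimed factor $(T-\t)^{-\g}$, uniformly in $(t,x)$.

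For \eqref{eq:derpotf}, the naive bound based on $\|f(\t,\cdot)\|_{L^{\infty}}$ only yields $(\t-t)^{-[\iot]_B/2}(T-\t)^{-\g}\|f\|_{\Linf{T,\g}{\av}}$, falling short by a factor $(\t-t)^{\av/2}$. I would recover it by exploiting the Hölder regularity of $f(\t,\cdot)$ via the classical freezing cancellation: setting $\y_* := e^{(\t-t)B}x$, split
\begin{equation}
\int_{\rn}\p_x^{\iot}\param(t,x;\t,\y)\, f(\t,\y)\, d\y =  \int_{\rn}\p_x^{\iot}\param(t,x;\t,\y) \bigl( f(\t,\y) - f(\t,\y_*) \bigr) d\y + f(\t,\y_*) \int_{\rn}\p_x^{\iot}\param(t,x;\t,\y)\, d\y.
\end{equation}
The first term is estimated via $|f(\t,\y) - f(\t,\y_*)| \leq \|f(\t,\cdot)\|_{\Canis{\av}} |\y-\y_*|_B^{\av}$ and the weighted Gaussian moment of $\Gb$, giving exactly $C(\t-t)^{-([\iot]_B-\av)/2}(T-\t)^{-\g}\|f\|_{\Linf{T,\g}{\av}}$.

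The second term is the delicate one. If $\C^{(\t,\y)}$ were independent of $\y$, then $\int \param\, d\y$ would be $\equiv 1$ and $\int \p_x^{\iot}\param\, d\y$ would vanish for $[\iot]_B \geq 1$. Here, a change of variables $z := \y - \y_*$ localizes the $x$-dependence of $\int \param\, d\y$ entirely into the $\y$-dependence of $\C^{(\t,\y)}(t,\t)$ through the coefficients $A_0\bigl(r, e^{(r-\t)B}(z+\y_*)\bigr)$; each $x$-derivative then produces either a standard Gaussian scaling factor of the form $(\t-t)^{-(2j+1)/2}$ or a factor controlled by the $\Canis{\aa}$-Hölder modulus of the coefficients, with $\aa > \av$. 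The careful bookkeeping of this cancellation across the full range $[\iot]_B\leq 4$ is, I expect, the main technical obstacle; it follows the strategy of \cite[Appendix B, Proposition B.2]{lucertini2022optimal}, which the present lemma cites verbatim.
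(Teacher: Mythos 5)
Your plan matches the paper's approach exactly: the paper gives no proof of its own and simply refers to \cite[Proposition~B.2]{lucertini2022optimal}, whose structure (pointwise Gaussian bounds on $\p_x^\iot\param$, the trivial $L^\infty$ bound for \eqref{eq:potf}, and the freezing cancellation at $\y_*=e^{(\t-t)B}x$ for \eqref{eq:derpotf}) is precisely what your sketch outlines. One caveat on your description of the cancellation: the $x$-derivatives $\p_x^{\iot}$ never land on the (merely H\"older) coefficient $A_0$, because the covariance $\C^{(\t,\y)}(t,\t)$ depends on $\y$ but not on $x$; the H\"older modulus of $A_0$ enters only after the change of variables $z=\y-e^{(\t-t)B}x$, when one subtracts the same polynomial-times-Gaussian with the frozen covariance $\C^{(\t,\y_*)}(t,\t)$, whose $z$-integral vanishes for $\iot\neq 0$, and controls the difference of covariances through the spatial regularity of the coefficients, yielding a gain of order $(\t-t)^{\aa/2}$ that dominates the $(\t-t)^{\av/2}$ from the first term.
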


We now recall the estimates for $\param$ and $\Phi$ proved in \cite[Propositions 2.7-3.2 and Lemma
3.4]{lucertini2022optimal}, which are stated in terms of the Gaussian density $\G^{\delta}$
defined, for any $\delta>0$, by
\begin{equation}\label{gaussian}
\G^{\d}(t,x;{\t},y):=\gg\big(\d\mathcal{C}({\t}-t),y-e^{({\t}-t)B}x\big), \qquad 0\le t<{\t}\le T,
\ x,y\in\rn,
\end{equation}
with
\begin{equation}\label{eq:Ct}
\mathcal{C}(t)= \int_{0}^{t}
e^{(t-\t) B} 
\begin{pmatrix} I_{d} & 0 \\
0 & 0\end{pmatrix} e^{(t-\t) B^*}d\t.
\end{equation}

\begin{proposition}\label{lem:deriv_theta_param}
For any $i,j,k = 1,\dots,d$, we have 
\begin{align}\label{eq:est_param}
\left|{\param}(t,x;{\t},y)\right| &\leq C \, \G^{2\m}(t,x;{\t},y), \\
\label{eq:est_der_first_param} \left|\p_{i}{\param}(t,x;{\t},y)\right| &\leq
\frac{C}{({\t}-t)^{\frac{1}{2}}} \G^{2\m}(t,x;{\t},y),
\\ \left|\p_{i j}{\param}(t,x;{\t},y)\right| &\leq \frac{C}{{\t}-t} \G^{2\m}(t,x;{\t},y), \label{eq:est_der_second_param}
\end{align}
and
\begin{align}\label{dercurveinteg_param}
\big|\p_{i}\param(s,e^{(s-t)B}x;{\t},y)-\p_{i}\param(t,x;{\t},y)\big|
&\leq C \frac{(s-t)^{\frac{1+\av}{2}} ({\t}-t)^{\frac{Q}{2}}}{({\t}-s)^{\frac{Q+1+\av}{2}}}
\G^{2\m}(t,x;{\t},y),\\
\big|\p_{ij}\param(s,e^{(s-t)B}x;{\t},y)-\p_{ij}\param(t,x;{\t},y)\big| 
&\leq C \frac{(s-t)^{\frac{\av}{2}} ({\t}-t)^{\frac{Q}{2}}}{({\t}-s)^{\frac{Q+2+\av}{2}}}
\G^{2\m}(t,x;{\t},y),  \label{dercurveinteg2_param}\\ \left|\p_{ij}\param(t,x  +h \mathbf{e}_k
;{\t},y)-\p_{ij}\param(t,x;{\t},y)\right|
&\leq C |h|^{\av} {\frac{\G^{2\m}(t,x +h \mathbf{e}_k ;{\t},y)+\G^{2\m}(t,x;{\t},y)}{({\t}-t)^{\frac{2 + \av }{2}}} }, 
\label{dercampidritti_param}
\end{align}
for any $0{<} t<s<{\t}{<} T$, $x\in\rn$ and $h\in\R$.
\end{proposition}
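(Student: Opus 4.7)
The plan is to exploit the explicit Gaussian structure of $\param$ together with the uniform ellipticity in (H.1) and the controllability encoded by the block form \eqref{B} of $B$. The first step would be a two-sided comparison between the frozen covariance $\C^{(\tau,y)}(t,\tau)$ and the model covariance $\C(\tau-t)$: by (H.1) the top-left $d\times d$ block of $A^{(\tau,y)}(r)$ satisfies $\m^{-1} I_d \le A_0(r, e^{(r-\tau)B}y) \le \m I_d$ uniformly in $(r,\tau,y)$, and combined with the H\"ormander condition (H.2), equivalent to the Kalman controllability of the pair $(A_0^{1/2}, B)$, this yields
$$
\m^{-1}\, \C(\tau-t) \;\le\; \C^{(\tau,y)}(t,\tau) \;\le\; \m\, \C(\tau-t)
$$
in the sense of quadratic forms. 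From this, the pointwise bound \eqref{eq:est_param} is a direct Gaussian comparison, with the factor $2\m$ absorbing the eigenvalue ratio.

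Next, for the pointwise derivative bounds \eqref{eq:est_der_first_param}--\eqref{eq:est_der_second_param}, I would differentiate the Gaussian explicitly. Writing $z := y - e^{(\tau-t)B}x$, each derivative $\p_i$ with $i \le d$ pulls down a factor proportional to $\big(e^{(\tau-t)B^*}(\C^{(\tau,y)}(t,\tau))^{-1} z\big)_i$, which by the anisotropic scaling of $\C$ and the block structure of $B$ is of order $(\tau-t)^{-1/2}$; the residual polynomial in $z$ is absorbed by slightly enlarging the exponent of the Gaussian, producing the $\G^{2\m}$ on the right-hand side. A second differentiation proceeds identically and yields an extra $(\tau-t)^{-1/2}$ factor.

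For the drift-direction H\"older estimates \eqref{dercurveinteg_param}--\eqref{dercurveinteg2_param}, the idea would be to exploit Lemma \ref{lem:deriv_param_sol1} and write
$$
\p_i \param(s, e^{(s-t)B}x; \tau, y) - \p_i \param(t, x; \tau, y) = \int_t^s Y\p_i \param\big(r, e^{(r-t)B}x; \tau, y\big)\, dr,
$$
expressing $Y\p_i\param = -\Ac^{(\tau,y)}\p_i\param - \sum_{j} b_{ji}\p_j\param$. By the previous step, $|Y\p_i\param(r,\cdot;\tau,y)| \le C(\tau-r)^{-3/2} \G^{2\m}$. A crucial auxiliary ingredient is the base-point comparison $\G^{2\m}(r, e^{(r-t)B}x; \tau, y) \le C\big((\tau-t)/(\tau-r)\big)^{Q/2} \G^{2\m}(t,x;\tau,y)$, which follows from the determinant scaling $\det \C(s) \asymp s^Q$ and the matrix bounds above. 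Integrating in $r$ controls the regime $s-t \ll \tau-t$, while the regime $s-t \asymp \tau-t$ is handled by the triangle inequality applied to the pointwise bounds on $\p_i\param$ from the previous step. Interpolating between the two regimes produces the exponent $(1+\alpha)/2$ in the numerator; the argument for $\p_{ij}$ is analogous with one additional derivative, yielding $(s-t)^{\alpha/2}$.

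Finally, \eqref{dercampidritti_param} would follow by the same interpolation principle applied to the segment joining $x$ and $x+h\ee_k$, balancing the triangle inequality against the integral of $|\p_{ijk}\param|$ along that segment. The main obstacle throughout is not the differentiation itself but the careful bookkeeping of time exponents as the base point of the Gaussian drifts: the homogeneous dimension $Q$ appears in both numerator and denominator of the estimates, and only the precise parabolic scaling $\det \C(s) \asymp s^Q$, together with the sharp matrix bounds on $\C^{(\tau,y)}$ from the first step, allows these powers to cancel cleanly. Once this scaling structure is firmly in place, what remains is routine, if lengthy, algebra.
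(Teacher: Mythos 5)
The paper does not prove this proposition: it is recalled from \cite[Propositions 2.7--3.2 and Lemma 3.4]{lucertini2022optimal}, so there is no proof here to compare against. Your reconstruction is nonetheless the standard parametrix argument and correctly identifies the main ingredients: the two-sided bound $\m^{-1}\C(\t-t)\le\C^{(\t,y)}(t,\t)\le\m\,\C(\t-t)$ from \textbf{(H.1)} together with the nondegeneracy of $\C$ granted by \textbf{(H.2)}, the Gaussian comparison with the enlarged parameter $2\m$ absorbing both the eigenvalue ratio and the polynomial factors produced by differentiation, the Lie-derivative representation $\p_i\param(s,e^{(s-t)B}x;\t,y)-\p_i\param(t,x;\t,y)=\int_t^s Y\p_i\param(r,e^{(r-t)B}x;\t,y)\,dr$ via Lemma \ref{lem:deriv_param_sol1}, the same-centre base-point comparison $\G^{2\m}(r,e^{(r-t)B}x;\t,y)\le C\big((\t-t)/(\t-r)\big)^{Q/2}\G^{2\m}(t,x;\t,y)$ driven by $\det\mathcal{C}(\sigma)\asymp\sigma^Q$, and the dichotomy $s-t\lessgtr\t-s$.

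Carrying this through, however, exposes a scaling inconsistency in \eqref{dercurveinteg_param} as printed. Under the intrinsic parabolic dilation the left-hand side scales like $\lambda^{-(Q+1)}$, whereas $(s-t)^{\frac{1+\av}{2}}(\t-t)^{\frac{Q}{2}}(\t-s)^{-\frac{Q+1+\av}{2}}\G^{2\m}(t,x;\t,y)$ scales like $\lambda^{-Q}$; the two sides differ by one parabolic order, so no time-uniform constant can close the inequality (one can check this directly on the one-dimensional heat kernel by sending $\t\to 0^+$ with $s/\t$ fixed, $t=0$). Your own outline produces the consistent version: $|Y\p_i\param|\lesssim(\t-r)^{-3/2}\G^{2\m}$, the base-point comparison, and $\int_{\t-s}^{\t-t}\sigma^{-\frac{Q+3}{2}}\,d\sigma\lesssim(s-t)(\t-s)^{-\frac{Q+3}{2}}$ in the regime $s-t\le\t-s$ give $(s-t)(\t-t)^{\frac{Q}{2}}(\t-s)^{-\frac{Q+3}{2}}$, which is dominated by $(s-t)^{\frac{1+\av}{2}}(\t-t)^{\frac{Q}{2}}(\t-s)^{-\frac{Q+2+\av}{2}}$ (and by the triangle-inequality bound in the complementary regime), but not by the expression as printed. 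The companion estimate \eqref{dercurveinteg} for $\Phi$, carrying $(\t-s)^{-\frac{Q+2+\av-\aa}{2}}$, points the same way. So your strategy is sound, but it yields the exponent $\frac{Q+2+\av}{2}$ rather than $\frac{Q+1+\av}{2}$; cross-check the exponent against \cite{lucertini2022optimal} before using \eqref{dercurveinteg_param} in the subsequent convolution estimates.
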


\begin{proposition}\label{lem:deriv_theta}
For any $i,j,k = 1,\dots,d$, we have 
\begin{align}\label{eq:est_Phi}
\left|{\Phi}(t,x;{\t},y)\right| &\leq C ({\t}-t)^{\frac{\aa}{2}} \G^{2\m}(t,x;{\t},y),
\\ \label{eq:est_der_first_Phi} \left|\p_{i}{\Phi}(t,x;{\t},y)\right| &\leq
\frac{C}{({\t}-t)^{\frac{1-\aa}{2}}} \G^{2\m}(t,x;{\t},y),
\\ \left|\p_{i j}{\Phi}(t,x;{\t},y)\right| &\leq \frac{C}{({\t}-t)^{\frac{2-\aa}{2}}} \G^{2\m}(t,x;{\t},y), \label{eq:est_der_second_Phi}
\end{align}
and
\begin{align}\label{dercurveinteg}
\big|\p_{i}\Phi(s,e^{(s-t)B}x;{\t},y)-\p_{i}\Phi(t,x;{\t},y)\big|
&\leq C \frac{(s-t)^{\frac{1+\av}{2}} ({\t}-t)^{\frac{Q}{2}}}{({\t}-s)^{\frac{Q+2+\av-\aa}{2}}}
\G^{2\m}(t,x;{\t},y),\\
\big|\p_{ij}\Phi(s,e^{(s-t)B}x;{\t},y)-\p_{ij}\Phi(t,x;{\t},y)\big| 
&\leq C \frac{(s-t)^{\frac{\av}{2}} ({\t}-t)^{\frac{Q}{2}}}{({\t}-s)^{\frac{Q+2+\av-\aa}{2}}}
\frac{}{} \G^{2\m}(t,x;{\t},y), \label{dercurveinteg2}\\ \left|\p_{ij}\Phi(t,x  +h \mathbf{e}_k
;{\t},y)-\p_{ij}\Phi(t,x;{\t},y)\right|
&\leq C |h|^{\av} {\frac{\G^{2\m}(t,x +h \mathbf{e}_k ;{\t},y)+\G^{2\m}(t,x;{\t},y)}{({\t}-t)^{\frac{2- (\a-\av) }{2}}} }, 
\label{dercampidritti}
\end{align}
for any $0{<} t<s<{\t}{<} T$, $x\in\rn$ and $h\in\R$.
\end{proposition}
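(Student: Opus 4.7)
The plan is to construct $\Phi$ via the classical Levi parametrix method, writing it as the space-time convolution
\begin{equation}
\Phi(t,x;\t,y) = \int_t^{\t} \!\int_{\R^N} \param(t,x;s,z)\, \Psi(s,z;\t,y)\, dz\, ds,
\end{equation}
where the Volterra kernel $\Psi$ solves $\Psi = Z + Z \otimes \Psi$, with $Z(t,x;\t,y) := \bigl(\L - \L^{(\t,y)}\bigr)\param(t,x;\t,y)$ and $\otimes$ denoting space-time convolution on $[t,\t]\times\R^N$. Every bound for $\Phi$ would then be derived from the corresponding bound for $\param$ in Proposition \ref{lem:deriv_theta_param} by placing the derivatives on the factor $\param(t,x;s,z)$ inside the convolution.

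First I would control $Z$. Its dominant contribution is $\bigl(a_{ij}(t,x) - a_{ij}(t,y)\bigr)\p_{ij}\param(t,x;\t,y)$; the $\aa$-H\"older regularity of $a_{ij}$ yields the factor $|x - e^{(\t-t)B}y|^{\aa}_B$, which can be absorbed into $\G^{2\m}(t,x;\t,y)$ at the cost of $(\t-t)^{\aa/2}$, and combining with \eqref{eq:est_der_second_param} gives
\begin{equation}
|Z(t,x;\t,y)| \leq C (\t-t)^{\aa/2 - 1}\,\G^{2\m}(t,x;\t,y).
\end{equation}
The same type of bound (with a larger constant) is then transferred to $\Psi$ by iteration, using the reproduction identity $\int_{\R^N} \G^{2\m}(t,x;s,z)\G^{2\m}(s,z;\t,y) dz = \G^{2\m}(t,x;\t,y)$ together with the beta-function computation of the iterated time integrals. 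Plugging this into the convolution representation, applying \eqref{eq:est_param}--\eqref{eq:est_der_second_param} to $\param$, and collapsing the resulting Gaussian integrals via the same reproduction identity yields \eqref{eq:est_Phi}--\eqref{eq:est_der_second_Phi}. The second-order estimate \eqref{eq:est_der_second_Phi} is the most delicate, since $(s-t)^{-1}$ is not integrable near $t$: one remedies this by isolating the singular term $\Ac^{(\t,y)}\param$ and rewriting it as $-Y\param(\cdot,\cdot;\t,y)$ (using $\L^{(\t,y)}\param(\cdot,\cdot;\t,y) = 0$), which absorbs the singularity via an integration-by-parts in time.

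The H\"older-type estimates \eqref{dercurveinteg}--\eqref{dercampidritti} are the main obstacle. For \eqref{dercurveinteg}, note that the integration domains for $\p_i \Phi(s, e^{(s-t)B}x; \t, y)$ and $\p_i \Phi(t, x; \t, y)$ are $[s,\t]$ and $[t,\t]$ respectively, so the difference decomposes as
\begin{equation}
\int_s^{\t}\!\!\int_{\R^N}\!\bigl[\p_i \param(s, e^{(s-t)B}x; r, z) - \p_i \param(t, x; r, z)\bigr] \Psi(r, z; \t, y)\, dz\, dr \,-\, \int_t^s\!\!\int_{\R^N}\! \p_i \param(t,x;r,z)\, \Psi(r,z;\t,y)\, dz\, dr.
\end{equation}
I would split the first integral further over $[s, 2s-t]$ and $[2s-t, \t]$: on the far interval one applies \eqref{dercurveinteg_param} directly to obtain the sharp $(s-t)^{(1+\av)/2}$ factor, while on the two short intervals $[t,s]$ and $[s, 2s-t]$ one invokes the triangle inequality and bounds each term separately via \eqref{eq:est_der_first_param}, trading the non-integrable $(r-t)^{-1/2}$ singularity against the regularity loss captured by the smaller exponent $\av < \aa$. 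The denominator exponent $(Q+2+\av-\aa)/2$ emerges from a tight balance between the $(\t-r)^{\aa/2-1}$ singularity inherited from $\Psi$ and the derivative singularity of $\param$. Analogous splittings handle \eqref{dercurveinteg2} and \eqref{dercampidritti}. The principal difficulty throughout is choosing the splitting thresholds so that all sharp exponents are attained simultaneously, while carefully tracking the Gaussian propagation under the non-trivial flow $e^{(\cdot - t)B}$.
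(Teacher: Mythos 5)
This proposition is not proved in the present paper: it is quoted directly from \cite[Propositions 2.7--3.2 and Lemma 3.4]{lucertini2022optimal}, whose construction is indeed the Levi parametrix scheme $\Phi = \param \otimes \Psi$, $\Psi = Z + Z\otimes\Psi$ that you describe. So the framework and the bound $|Z|\lesssim (\t-t)^{\aa/2-1}\G^{2\m}$ are right. The technical execution you sketch, however, would not deliver the sharp exponents.

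First, consider \eqref{eq:est_der_second_Phi} and \eqref{dercampidritti}. The singularity of $\p_{ij}\param(t,x;r,z)$ is of order $(r-t)^{-1}$, which against the $(\t-r)^{\aa/2-1}$ singularity of $\Psi$ produces a divergent time integral near $r=t$. Your proposed remedy --- rewriting $\Ac^{(\t,y)}\param = -Y\param$ and integrating by parts in time --- only controls the full elliptic combination $\sum a^{(\t,y)}_{ij}\p_{ij}\Phi$, not the individual derivatives $\p_{ij}\Phi$, and so cannot isolate each $\p_{ij}$. The standard device here is the cancellation $\int_{\R^N}\p_{ij}\param(t,x;r,z)\,dz = 0$, which lets you subtract $\Psi(r,e^{(r-t)B}x;\t,y)$ under the integral and then exploit a H\"older estimate on $\Psi(r,\cdot;\t,y)$; but this spatial H\"older regularity of $\Psi$ is a separate, non-trivial iteration result that your plan does not establish.

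Second, the triangle inequality on the short intervals $[t,s]$ and $[s,2s-t]$ in your plan for \eqref{dercurveinteg} is genuinely insufficient. Take the piece
\begin{equation}
\int_t^s\int_{\R^N}\p_i\param(t,x;r,z)\,\Psi(r,z;\t,y)\,dz\,dr .
\end{equation}
Using $|\p_i\param|\lesssim(r-t)^{-1/2}\G^{2\m}$, $|\Psi|\lesssim(\t-r)^{\aa/2-1}\G^{2\m}$ and the Gaussian reproduction identity yields a coefficient of order $(s-t)^{1/2}(\t-s)^{(\aa-2)/2}$. In the regime $\t-s\gtrsim s-t$ (the only one where your splitting is non-trivial, and where $(\t-t)^{Q/2}/(\t-s)^{Q/2}$ is $O(1)$), the target coefficient is $(s-t)^{(1+\av)/2}(\t-s)^{(\aa-2-\av)/2}$, and the ratio of your bound to the target is $\bigl((\t-s)/(s-t)\bigr)^{\av/2}$, which is unbounded. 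The missing $(s-t)^{\av/2}$ cannot come from the triangle inequality: one must either invoke the a.e.\ Lie equation satisfied by $\p_i\param$ (an analogue of Lemma \ref{lem:deriv_param_sol1}, which converts the time increment into a full-order $(s-t)$ time integral of a more singular quantity, to be interpolated against the remaining $(\t-s)$ powers) or again use the cancellation of $\p_i\param$ together with H\"older regularity of $\Psi$. Both ideas are absent from your proposal, and without one of them the stated rates in \eqref{dercurveinteg}--\eqref{dercampidritti} are not reachable.
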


By the Lemma \ref{lemm:potf} and Proposition \ref{lem:deriv_theta}, we have the following, direct,
\begin{proposition}\label{prop:repres_derivatives_convol}
For ${\bf F} = \param, \Phi$, we have 
\begin{align}\label{eq:represent_deriv_potent_param_bis}
\nabla_d V_{{\bf F},f}(t,x) &=  \int_t^T\int_{\R^N} 
\nabla_d {\bf F}(t,x;\t,\y)f(\t,\y)d\y d\t,\\ \label{eq:represent_deriv_potent_param}
\nabla^2_d V_{{\bf F},f}(t,x)& =  \int_t^T\int_{\R^N} 
\nabla^2_d {\bf F}(t,x;\t,\y)f(\t,\y)d\y d\t,
\end{align}
for any $0<t<T$ and $x\in\rn$.
\end{proposition}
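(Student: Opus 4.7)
The plan is to establish \eqref{eq:represent_deriv_potent_param_bis} and \eqref{eq:represent_deriv_potent_param} via dominated convergence, justifying the interchange of the spatial derivatives $\nabla_d, \nabla^2_d$ with the double integral defining $V_{{\bf F},f}$. The Gaussian-type pointwise bounds in Propositions \ref{lem:deriv_theta_param}--\ref{lem:deriv_theta} will handle the inner integral in $\y$, while the potential estimate \eqref{eq:derpotf} of Lemma \ref{lemm:potf} is the key ingredient to control the outer integral in $\t$ when passing the second derivative inside $V_{\param,f}$.

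For \eqref{eq:represent_deriv_potent_param_bis}, I would combine \eqref{eq:est_der_first_param}--\eqref{eq:est_der_first_Phi} with the obvious bound $\|f(\t,\cdot)\|_{L^\infty}\leq(T-\t)^{-\g}\|f\|_{\Linf{T,\gamma}{\av}}$ to dominate the integrand $|\p_i{\bf F}(t,x;\t,\y)f(\t,\y)|$ by $C(\t-t)^{-\frac{1}{2}}(T-\t)^{-\g}\G^{2\m}(t,x;\t,\y)\|f\|_{\Linf{T,\gamma}{\av}}$, for $i=1,\dots,d$ and ${\bf F}\in\{\param,\Phi\}$. Since $\G^{2\m}(t,x;\t,\y)$ integrates to a constant in $\y$ uniformly in $x$ on compacta (a small perturbation of $x$ can be absorbed by a slightly larger Gaussian exponent), and the time factor $(\t-t)^{-\frac{1}{2}}(T-\t)^{-\g}$ is integrable on $(t,T)$, standard dominated convergence yields \eqref{eq:represent_deriv_potent_param_bis}.

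For \eqref{eq:represent_deriv_potent_param}, the case ${\bf F}=\Phi$ is analogous: the improved bound \eqref{eq:est_der_second_Phi} still produces a time factor $(\t-t)^{-\frac{2-\aa}{2}}$, integrable in $\t$ since $\aa>0$. The delicate case is ${\bf F}=\param$, where \eqref{eq:est_der_second_param} only yields the critical non-integrable singularity $(\t-t)^{-1}$; this forces me to split the differentiation in $x_j$ into two stages. First, for each fixed $\t>t$, DCT applied to the inner integral (using \eqref{eq:est_der_second_param} together with the same Gaussian-shift trick) gives
\begin{equation}
\p_{x_j}\!\!\int_{\R^N}\p_i\param(t,x;\t,\y)f(\t,\y)d\y = \int_{\R^N}\p_{ij}\param(t,x;\t,\y)f(\t,\y)d\y.
\end{equation}
Second, to pass $\p_{x_j}$ through the outer $\int_t^T$, I would invoke \eqref{eq:derpotf} with a multi-index $\iot\in\N^N_0$ of $B$-length $2$, which provides the improved bound $C(T-\t)^{-\g}(\t-t)^{-\frac{2-\av}{2}}\|f\|_{\Linf{T,\gamma}{\av}}$. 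Since $\frac{2-\av}{2}<1$ and $\g<1$, this is integrable in $\t$, and a second application of DCT concludes the argument.

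The main obstacle is precisely the critical $(\t-t)^{-1}$ singularity of $\p_{ij}\param$ on the diagonal $\t=t$, which rules out a one-shot DCT. The cancellation underlying Lemma \ref{lemm:potf}, which recovers an extra factor $(\t-t)^{\av/2}$ thanks to the spatial H\"older regularity of $f$ combined with Gaussian moment identities, is exactly what restores $\t$-integrability and ultimately makes \eqref{eq:represent_deriv_potent_param} well-posed.
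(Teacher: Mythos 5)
Your proposal is correct and takes the same route the paper does: the paper states this proposition as a direct consequence of Lemma \ref{lemm:potf} and Proposition \ref{lem:deriv_theta}, and your dominated-convergence argument—crucially splitting the differentiation of $\nabla^2_d V_{\param,f}$ into two stages to avoid the non-integrable $(\t-t)^{-1}$ singularity of $\nabla^2_d\param$, with \eqref{eq:derpotf} at $B$-length $2$ supplying the integrable time-weight for the outer integral—is exactly the verification the paper takes for granted.
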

The following identities directly stem from the {boundedness assumption on $g$ 
}
and from Propositions \ref{lem:deriv_theta_param} and \ref{lem:deriv_theta}. 
\begin{proposition}\label{prop:repr_der_poten_g}
We have
\begin{align}\label{eq:represent_deriv_potent_param_bis_g}
\nabla_d V_{g}(t,x) &=  \int_{\R^N} 
\nabla_d\,  p(t,x;{\t},y)g(y)d y ,\\ \label{eq:represent_deriv_potent_param_g}
\nabla^2_d V_{g}(t,x) &=  \int_{\R^N} 
\nabla^2_d\,  p(t,x;{\t},y)g(y)d y,
\end{align}
for any $0<t<{\t}<T$, $x\in\rn$.
\end{proposition}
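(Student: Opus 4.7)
The plan is to apply the standard differentiation-under-the-integral-sign theorem to the definition of $V_g$, using the pointwise Gaussian bounds on $\partial_i p$ and $\partial_{ij} p$ (for $i,j=1,\ldots,d$) provided by Propositions \ref{lem:deriv_theta_param} and \ref{lem:deriv_theta}, together with the boundedness of $g$. The crucial observation is that the Gaussian kernel $\G^{2\m}$ defined in \eqref{gaussian} is, as a function of $y$ for $(t,x;T)$ fixed, an $L^1$-density integrating to a positive constant depending only on the structural parameters: by the change of variable $z = y - e^{(T-t)B}x$ one has $\int_{\R^N}\G^{2\m}(t,x;T,y)\,dy = \int_{\R^N}\gauss(2\m\,\mathcal{C}(T-t), z)\,dz$, which is independent of $x$. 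This bound is moreover stable under small perturbations of $x$, so the kernels $y \mapsto \G^{2\m}(t,\tilde x;T,y)$ are jointly dominated by a single integrable function as $\tilde x$ varies in a bounded neighborhood of $x$.

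Combining \eqref{eq:est_der_first_param} with \eqref{eq:est_der_first_Phi}, and \eqref{eq:est_der_second_param} with \eqref{eq:est_der_second_Phi}, for any $i,j\in\{1,\dots,d\}$ and $0<t<T$ we obtain pointwise estimates of the form
\begin{align}
|\partial_i p(t,x;T,y)| &\leq \frac{C}{(T-t)^{\frac{1}{2}}}\, \G^{2\m}(t,x;T,y), \\
|\partial_{ij} p(t,x;T,y)| &\leq \frac{C}{T-t}\, \G^{2\m}(t,x;T,y),
\end{align}
where the smoothing gains $(T-t)^{\aa/2}$ coming from $\Phi$ are absorbed in the constant $C$, since $T-t$ is bounded. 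Multiplying by $\|g\|_{L^\infty}$, we obtain dominating functions that are integrable in $y$, uniformly for $x$ ranging over any bounded set, which is precisely the hypothesis required by Lebesgue's theorem on the interchange of derivative and integral.

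Concretely, to establish \eqref{eq:represent_deriv_potent_param_bis_g} I would write the $i$-th partial difference quotient
\begin{equation}
\frac{V_g(t,x+h e_i) - V_g(t,x)}{h} = \int_{\R^N} \frac{p(t,x+h e_i;T,y) - p(t,x;T,y)}{h}\, g(y)\,dy, \qquad i=1,\dots,d,
\end{equation}
apply the mean value theorem to bound the integrand pointwise by $\|g\|_{L^\infty}\sup_{|s|\leq |h|}|\partial_i p(t,x+s e_i;T,y)|$, dominate this uniformly for small $|h|$ by the Gaussian estimate above, and pass to the limit as $h\to 0$ by dominated convergence, invoking also the pointwise continuity of $\partial_i p(t,\cdot;T,y)$. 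The second identity \eqref{eq:represent_deriv_potent_param_g} follows by the same blueprint, starting from \eqref{eq:represent_deriv_potent_param_bis_g} and employing the second-derivative bound. No genuine obstacle is anticipated: the entire argument is bookkeeping around Gaussian estimates that are already in place.
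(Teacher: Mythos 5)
Your proposal is correct and follows the same route the paper implicitly takes: the authors state that the identities ``directly stem'' from the boundedness of $g$ and the Gaussian estimates in Propositions \ref{lem:deriv_theta_param} and \ref{lem:deriv_theta}, and your argument is exactly the expected fleshing-out of that remark, combining $|\partial^\iot \param|$ and $|\partial^\iot \Phi|$ bounds (the latter absorbing the extra factor $(T-t)^{\aa/2}$ into the constant) to dominate the difference quotients and invoke dominated convergence. The only cosmetic point is that the $\tau$ in the displayed identities should read $T$ to match the definition of $V_g$, which you have tacitly and correctly done.
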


In the sequel we will make use of the special functions ${}_2F_1$ and $\mathbf{B}$, which denote
the Gaussian hypergeometric function and the incomplete Beta function, respectively.
\begin{remark}\label{lem:iper}
We recall the following known properties. For any $\gamma\in[0,1)$ and $ \alpha\in (0,1)$ we have:
\begin{itemize}
\item[(a)] ${}_2F_1\big( \frac{\av - 1}{2},\gamma;\frac{1+\av }{2};\cdot\big)$ is bounded on $[0,1/2]$;
\item[(b)] 
there exists $\kappa=\kappa({\alpha,\gamma})>0$ such that
\begin{equation}
\mathbf{B}(x,\a/2 , 1-\g) \leq \kappa\, x^{\alpha/2}, \qquad x\in [0,1].
\end{equation}
\end{itemize}
\end{remark}
%
%
%

\subsection{H\"older estimates for 
$V_{\param,f}$ and $V_{\Phi,f}$}

In this section we prove the following H\"older estimates for 
$V_{\param,f}$ and $V_{\Phi,f}$, on which the proof of Theorem \ref{th:main} relies.
\begin{proposition}\label{prop:holder_bounds_V_param_Phi}
For ${\bf F} = \param, \Phi$, and for any $i,j,k=1,\dots,d$, we have
\begin{align}\label{eq:bound}
  |  V_{{\bf F},f}(t,x)  | &\leq C (T-t)^{-\gamma+1} \|f\|_{\Linf{T,\gamma}{\av}}, \\
\label{eq:bound_der_first}
  | \p_{i} V_{{\bf F},f}(t,x)  | &\leq C (T-t)^{-\gamma+\frac{1+\av}{2}} \|f\|_{\Linf{T,\gamma}{\av}}, \\
| \p_{i j} V_{{\bf F},f}(t,x)  |&\leq C {(T-t)^{-\gamma +\frac{\av}{2}}
\|f\|_{\Linf{T,\gamma}{\av}} },  \label{eq:bound_der_second}
\end{align}
and
\begin{align}\label{fcampidritti}
|\p_{i j} V_{{\bf F},f} (t,x+h\ee_k) - \p_{i j} V_{{\bf F},f}(t,x)  |
&\leq C |h|^{\av} {(T-t)^{-\gamma}  \|f\|_{\Linf{T,\gamma}{\av}} }, 
 \\ \label{reg_integral_Vparam}
  |\p_{i} V_{{\bf F},f} \big(s,e^{(s-t)B}x\big) - \p_{i} V_{{\bf F},f}(t,x)  |
&\leq C(s-t)^{\frac{1+\av}{2}} (T-s)^{-\gamma} \|f\|_{\Linf{T,\gamma}{\av}}, 
\\ \label{reg_integral_Vparam_bis}
|\p_{i j} V_{{\bf F},f} \big(s,e^{(s-t)B}x\big) - \p_{i j} V_{{\bf F},f}(t,x)  |
&\leq C(s-t)^{\frac{\av}{2}} (T-s)^{-\gamma} \|f\|_{\Linf{T,\gamma}{\av}}, 
\end{align}
for any $0<t<s<T$, $x\in\rn$ and $h\in \R$. 
\end{proposition}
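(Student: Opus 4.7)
The plan is to differentiate under the integral sign via Proposition~\ref{prop:repres_derivatives_convol}, reducing every estimate of $V_{{\bf F},f}$ and its derivatives to a double integral in $(\t,y)$ of $\nabla_d{\bf F}\cdot f$ or $\nabla_d^2{\bf F}\cdot f$, and then to combine two complementary families of inputs: the cancellation-type potential bounds of Lemma~\ref{lemm:potf} (available only for ${\bf F}=\param$, and which exploit the H\"older regularity of $f$ in $y$) and the pointwise Gaussian estimates of Propositions~\ref{lem:deriv_theta_param}-\ref{lem:deriv_theta}. The overall philosophy is that the $\Phi$-kernels carry an extra $(\t-t)^{\aa/2}$ factor throughout, which makes the $\t$-integrals converge almost for free, whereas every $\param$-estimate demands a preliminary cancellation step in order to improve the time-singularity from non-integrable to integrable.

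The pointwise bounds \eqref{eq:bound}-\eqref{eq:bound_der_second} are the simplest: for $\param$ I would apply Lemma~\ref{lemm:potf} with $[\iot]_B=0,1,2$ and integrate $(T-\t)^{-\gamma}(\t-t)^{-([\iot]_B-\av)/2}$ as a Beta integral in $\t$; for $\Phi$ I would use \eqref{eq:est_Phi}-\eqref{eq:est_der_second_Phi} together with $\int\G^{2\m}(t,x;\t,y)dy=1$ and again reduce to a Beta integral. For \eqref{fcampidritti} (H\"older in the $\ee_k$-direction, $k\leq d$), the $\Phi$-contribution is immediate from \eqref{dercampidritti} since the singularity $(\t-t)^{-(2-(\aa-\av))/2}$ is integrable; the $\param$-contribution requires splitting $]t,T[$ at the parabolic scale $\t_0:=t+|h|^2$, using Lemma~\ref{lemm:potf} with $[\iot]_B=2$ on $]t,\t_0[$ to produce the factor $|h|^\av$ and combining \eqref{dercampidritti_param} with further cancellation against the zero-mass identities of $\p^\iot_y\param$ on $]\t_0,T[$ to compensate the otherwise non-integrable $(\t-t)^{-(2+\av)/2}$.

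For the $Y$-direction H\"older estimates \eqref{reg_integral_Vparam}-\eqref{reg_integral_Vparam_bis} I would split the integral in $\t$ at $\t=s$. On $]t,s[$ only the $\p_i{\bf F}(t,x;\t,y)$ (respectively $\p_{ij}{\bf F}(t,x;\t,y)$) term is present in the difference, and a direct bound via Lemma~\ref{lemm:potf} for $\param$ with $[\iot]_B=1,2$, or via \eqref{eq:est_der_first_Phi}-\eqref{eq:est_der_second_Phi} for $\Phi$, combined with $(T-\t)^{-\gamma}\leq(T-s)^{-\gamma}$, yields the required powers $(s-t)^{(1+\av)/2}$ and $(s-t)^{\av/2}$. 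On $]s,T[$ I would invoke \eqref{dercurveinteg_param}/\eqref{dercurveinteg} and \eqref{dercurveinteg2_param}/\eqref{dercurveinteg2}, integrate $\G^{2\m}$ in $y$, and reduce to a scalar integral in $\t$ that the substitution $u=(s-t)/(\t-t)$ turns into a hypergeometric-type expression controlled by Remark~\ref{lem:iper}.

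The main obstacle is the $\param$-version of the $Y$-direction estimate on $]s,T[$: \eqref{dercurveinteg2_param} produces an integrand of the form $(s-t)^{\av/2}(\t-t)^{Q/2}(\t-s)^{-(Q+2+\av)/2}(T-\t)^{-\gamma}$, whose singularity at $\t=s$ is non-integrable. The correct accounting hinges on balancing the factor $(\t-t)^{Q/2}$ against $(\t-s)^{-(Q+2+\av)/2}$ through the above substitution and then applying the boundedness of ${}_2F_1\bigl(\tfrac{\av-1}{2},\gamma;\tfrac{1+\av}{2};\cdot\bigr)$ on $[0,1/2]$ from Remark~\ref{lem:iper}(a), while the incomplete Beta bound in Remark~\ref{lem:iper}(b) takes care of the contribution near the other endpoint.
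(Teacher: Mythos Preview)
Your overall architecture is right, and the treatment of $V_{\Phi,f}$ and of the pointwise bounds \eqref{eq:bound}--\eqref{eq:bound_der_second} is essentially the paper's. The genuine gap is in the $\param$-case of the $Y$-direction estimates \eqref{reg_integral_Vparam}--\eqref{reg_integral_Vparam_bis}.

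Your plan is to apply \eqref{dercurveinteg_param}/\eqref{dercurveinteg2_param} on the whole of $]s,T[$, integrate out $\Gamma^{2\mu}$, and then handle the resulting $\tau$-integral by a substitution. But the integral
\[
\int_s^T (\tau-t)^{Q/2}(\tau-s)^{-(Q+2+\alpha)/2}(T-\tau)^{-\gamma}\,d\tau
\]
is \emph{genuinely divergent} at $\tau=s$: near that endpoint $(\tau-t)^{Q/2}\sim (s-t)^{Q/2}$ is bounded away from zero and does nothing to tame the $(\tau-s)^{-(Q+2+\alpha)/2}$ singularity. A change of variable cannot convert a divergent integral into a convergent one; after your substitution $u=(s-t)/(\tau-t)$ the integrand still carries a non-integrable $(1-u)^{-(Q+2+\alpha)/2}$ factor near $u=1$, so the appeal to Remark~\ref{lem:iper}(a) is misplaced. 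The same objection applies, one order lower, to \eqref{dercurveinteg_param}.

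The paper's fix has two ingredients you are missing. First, it splits $]s,T[$ at $s+h$ with $h:=s-t$ (exactly as you do for \eqref{fcampidritti}). On the near piece $]s,s+h[$ one uses the triangle inequality and Lemma~\ref{lemm:potf} separately on each of the two terms; it is the \emph{cancellation} built into Lemma~\ref{lemm:potf} (the $(\tau-t)^{\alpha/2}$ gained from the H\"older regularity of $f$) that makes this piece integrable. Second, on the far piece $]s+h,T[$ the paper does \emph{not} use \eqref{dercurveinteg_param}; instead it invokes Lemma~\ref{lem:deriv_param_sol1} to rewrite
\[
\partial_i\param(s,e^{(s-t)B}x;\tau,y)-\partial_i\param(t,x;\tau,y)
=-\int_t^s\Big(\mathscr{A}^{(\tau,y)}\partial_i\param+\textstyle\sum_j b_{ji}\partial_j\param\Big)(r,e^{(r-t)B}x;\tau,y)\,dr,
\]
and then applies Lemma~\ref{lemm:potf} with $[\kappa]_B=3$ to each term. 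This produces an integrand $\sim h\,(T-\tau)^{-\gamma}(\tau-s)^{-(3-\alpha)/2}$ on $]s+h,T[$, and it is \emph{this} integral (not yours) that the hypergeometric computation of Remark~\ref{lem:iper}(a) controls. You never mention Lemma~\ref{lem:deriv_param_sol1}; it is the missing key step.

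A smaller point: for \eqref{fcampidritti} in the $\param$-case, the paper does not use \eqref{dercampidritti_param} at all. On the far piece $]t+h^2,T[$ it applies the mean-value theorem to produce a third derivative $\partial_{ijk}\param$ and then Lemma~\ref{lemm:potf} with $[\kappa]_B=3$; this is how the factor $|h|$ combines with the cancellation to yield $|h|^\alpha(T-t)^{-\gamma}$. Your ``zero-mass identities'' remark points in the right direction but is not the mechanism actually used.
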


\begin{proof}[Proof of Proposition \ref{prop:holder_bounds_V_param_Phi} for ${\bf F} = \param$.]





Estimates \eqref{eq:bound}-\eqref{eq:bound_der_first}-\eqref{eq:bound_der_second} are a
straightforward consequence of estimates \eqref{eq:potf}, \eqref{eq:derpotf} with $\partial^{\iot}
= \partial_i$, and \eqref{eq:derpotf} with $\partial^{\iot} = \partial_{i j}$, respectively.

We now fix $0<t<T$, $x\in\rn$ and prove \eqref{fcampidritti} in two separate cases.\\
\vspace{2pt}\emph{Case {$2 h^2 \leq T-t$}.} 
We define
\begin{align}
I(\t):=\int_{\R^N} \big(\p_{i j}\param(t,x+h\ee_k;s,y)-\p_{i j}\param(t,x;s,y)\big) f(s,y)dy
\end{align}
so that
\begin{align}
|\p_{i j} V_{\param,f} (t,x+h\ee_k) - \p_{i j} V_{\param,f}(t,x)  |
=\underbrace{\int_{t+h^2}^T I(\t)d\t }_{=:I_1} + \underbrace{\int_t^{t+h^2} I(\t)d\t }_{=:I_2}.
\end{align}
We consider $I_1$. 
By the mean-value theorem, there exists a real $\bar h$ with $|\bar h|\leq |h|$ such that
\begin{equation}
\left| \p_{ij}\param (t,x + h \ee_k ;\t,\y) - \p_{ij}\param(t,x;\t,\y)\right|=|h| \left|
\p_{ijk}\param (t,x +\bar h \mathbf{e}_k ;\t,\y)\right|.
\end{equation}
Therefore, by the estimate {\eqref{eq:derpotf} with $\p^{\iot}_x = \p_{i j k}$} 
we have 
\begin{equation}\label{eq:est_I_1}
|I_1| \leq C \int_{t+h^2}^T\frac{|h|}{(T-\t)^\g({\t-t})^{\frac{3-\av}{2}}}d\t \
\|f\|_{\Linf{T,\g}{\av}} .
\end{equation}
Now, a direct computation yields
\begin{equation}
\int_{t+h^2}^T\frac{1}{(T-\t)^\g({\t-t})^{\frac{3-\av}{2}}}d\t = \frac{\Gamma_{E}\Big(\frac{\av -
1}{2}\Big)}{(T-t)^{\gamma}} \Bigg(  (T-t)^{\frac{\av - 1}{2}}
\frac{\Gamma_{E}(1-\g)}{\Gamma_{E}\Big(\frac{1+\av }{2}-\g\Big)}    - |h|^{\av - 1} {}_2F_1\Big(
\frac{\av - 1}{2},\gamma;\frac{1+\av }{2};\frac{h^2}{T-t}\Big)   \Bigg),
\end{equation}
{where $\Gamma_{E}$ and ${}_2F_1$ denote, respectively, the Euler Gamma and the Gaussian}
hypergeometric functions. {This, together with \eqref{eq:est_I_1}, ${2 h^2 \leq T-t}$ and Lemma
\ref{lem:iper}-(a), proves
\begin{equation}
|I_1|\leq C |h|^{\av} \|f\|_{\Linf{T,\g}{\av}}  (T-t)^{-\gamma}.
\end{equation}}

We now consider $I_2$. 
By employing triangular inequality and estimate {\eqref{eq:derpotf} with $\p^{\iot}_x = \p_{i j}$} 
we obtain
\begin{equation}\label{eq:est_I2}
|I_2|\leq \int_t^{t+h^2}\frac{C}{(T-\tau)^{\gamma}(\t-t)^{\frac{2-\av}{2}}} d\t \ \|f\|_{\Linf{T,\gamma}{\av}}.
\end{equation}
A direct computation yields
\begin{equation}
 \int_t^{t+h^2}\frac{1}{(T-\tau)^{\gamma}(\t-t)^{\frac{2-\av}{2}}} d\t = (T-t)^{\frac{\av}{2} - \gamma} \mathbf{B}\Big( \frac{h^2}{T-t} ,  \frac{\av}{2} , 1 -  \g  \Big) ,
\end{equation}
{where $\mathbf{B}$ denotes the incomplete Beta function.} This, together with \eqref{eq:est_I2}
and {Lemma \ref{lem:iper}-(b)}, proves
\begin{equation}
|I_2|\leq C |h|^{\av} \|f\|_{\Linf{T,\g}{\av}}  (T-t)^{-\gamma},
\end{equation}
and thus \eqref{fcampidritti} when ${2 h^2 \leq T-t}$.
\\
\vspace{2pt}\emph{Case ${2 h^2 > T-t}$.} 
By employing the triangular inequality and estimate {\eqref{eq:derpotf} with $\p^{\iot}_x = \p_{i j}$} 
we obtain
\begin{align}
|\p_{i j} V_{\param,f} (t,x+h\ee_k) - \p_{i j} V_{\param,f}(t,x)  |
& \leq C \int_t^{T}\frac{\|f\|_{\Linf{T,\gamma}{\av}}}{(T-\tau)^{\gamma}(\t-t)^{\frac{2-\av}{2}}}
d\t  \leq  C  \, \|f\|_{\Linf{T,\gamma}{\av}} (T-t)^{\av/2-\g} \intertext{(as  ${T-t \leq 2
h^2}$)} & \leq  C  \, |h|^{\av} \|f\|_{\Linf{T,\gamma}{\av}} (T-t)^{-\g} .
\end{align}

We now prove \eqref{reg_integral_Vparam}.
%
%
%
By adding and subtracting, we have
\begin{align}
  \p_{i} V_{\param,f} \big(s,e^{(s-t)B}x\big) - \p_{i} V_{\param,f}(t,x)
&=\underbrace{\int_{s}^T\int_{\R^N}
\Big(\p_{i}\param(s,e^{(s-t)B}x;\t,\y)-\p_{i}\param(t,x;\t,\y)\Big) f(\t,\y)d\y d\t }_{=: I} \\
&\quad -\underbrace{\int_{t}^{s}\int_{\R^N} \p_{i}\param(t,x;\t,\y) f(\t,\y)d\y d\t}_{=: L}.\qquad
\label{eq:estim_split}
\end{align}
Estimate {\eqref{eq:derpotf} with $\p^{\iot}_x = \p_{i}$} 
yields
\begin{equation}
|L| \leq C\int_t^s\frac{1}{(T-\tau)^{\gamma}(\t-t)^{\frac{1-\av}{2}}} d\t
\|f\|_{\Linf{T,\gamma}{\av}}
\leq  C (s-t)^{\frac{1+\av}{2}} (T-s)^{-\g}\|f\|_{\Linf{T,\gamma}{\av}} .
\end{equation}
We now prove
\begin{equation}\label{eq:bound_I}
|I|\leq C (T-s)^{-\g} \h^{\frac{1+\av}{2}}\|f\|_{\Linf{T,\gamma}{\av}}.
\end{equation}
Set $\h:=s-t$ and consider, once more, two separate cases.
\\\vspace{2pt}\emph{Case ${2 \h<T-s}$.} We split the integral
\begin{align}
I
&= \underbrace{\int_{s}^{s+\h} 
\int_{\R^N} \Big(\p_{i}\param(s,e^{(s-t)B}x;\t,\y)-\p_{i}\param(t,x;\t,\y)\Big)
f(\t,\y)d\y}_{=:H_1} \\
&\quad +\underbrace{\int_{s+\h}^{T} 
\int_{\R^N}  \Big(\p_{i}\param(s,e^{(s-t)B}x;\t,\y)-\p_{i}\param(t,x;\t,\y)\Big)   f(\t,\y)d\y}_{=:H_2}.
\end{align}
We estimate $H_1$. By the triangular inequality and {\eqref{eq:derpotf} with $\p^{\iot}_x = \p_{i}$} 
we have
\begin{align}
|H_1|&\leq C \int_{s}^{s+\h} \frac{1}{(T-\tau)^{\gamma}}\(\frac{1}{(\t-s)^{\frac{1-\av}{2}}}+
\frac{1}{(\t-t)^{\frac{1-\av}{2}}}\)d\t \,  \|f\|_{\Linf{T,\gamma}{\av}} \intertext{(since
$\t-s<\t-t$)} &\leq C \int_{s}^{s+\h} \frac{1}{(T-\tau)^{\gamma}(\t-s)^{\frac{1-\av}{2}}}  d\t \,
\|f\|_{\Linf{T,\gamma}{\av}}
\intertext{(by a direct computation)} &\leq C (T-s)^{-\g} (T-s)^{\frac{1+\av}{2}}
B\Big(\frac{h}{T-s},\frac{1+\av}{2},1-\gamma\Big)\|f\|_{\Linf{T,\gamma}{\av}}.
\end{align}
Therefore by {Lemma \ref{lem:iper}-(b)} we have
\begin{equation}\label{eq:bound_H1}
|H_1|\leq C (T-s)^{-\g} \h^{\frac{1+\av}{2}}\|f\|_{\Linf{T,\gamma}{\av}}.
\end{equation}
We now consider $H_2$.
By Lemma \ref{lem:deriv_param_sol1} and {by Fubini's theorem} 
we have
\begin{equation}
H_2 = - \int_{s+\h}^T \int_t^s \int_{\R^N} \bigg(  \big( \A^{(\t,\y)}
\p_{i}\param\big)(r,e^{(r-t)B}x;\t,\y) + \sum_{j=1}^{d+d_1}
b_{ji}\p_{j}\param(r,e^{(r-t)B}x;\t,\y) \bigg) f(\t,\y)d\y dr d\t.
\end{equation}
Therefore, the estimates {\eqref{eq:derpotf} with $[{\iot}]_B = 3$} 
and {the regularity assumptions on the coefficients} 
yield
\begin{align}
|H_2|&\leq C \|f\|_{\Linf{T,\gamma}{\a}} \int_{s+\h}^T \int_t^s
\frac{1}{(T-\t)^\g({\t-r})^{\frac{3-\av}{2}}}  dr d\t \\ &\leq C h \|f\|_{\Linf{T,\gamma}{\a}}
\int_{s+\h}^T \frac{1}{(T-\t)^\g({\t-s})^{\frac{3-\av}{2}}} d\t \intertext{(by direct
computation)} &= C h \|f\|_{\Linf{T,\gamma}{\a}} \frac{\Gamma_{E}\Big(\frac{\av -
1}{2}\Big)}{(T-s)^{\gamma}} \Bigg(  (T-s)^{\frac{\av - 1}{2}}
\frac{\Gamma_{E}(1-\g)}{\Gamma_{E}\Big(\frac{1+\av }{2}-\g\Big)}    - h^{\frac{\av -
1}{2}}{}_2F_1\Big( \frac{\av - 1}{2},\gamma;\frac{1+\av }{2};\frac{h}{T-s}\Big)   \Bigg).
\label{eq:bound_I2} \intertext{(as ${2 h \leq T-s}$ and by Lemma \ref{lem:iper}-(a))} &\leq C
(T-s)^{-\g} \h^{\frac{1+\av}{2}}\|f\|_{\Linf{T,\gamma}{\av}},
\end{align}
which, 
together with \eqref{eq:bound_H1}, yields \eqref{eq:bound_I}.
\\
\vspace{2pt}\emph{Case ${2\h \geq T-s}$.}
By triangular inequality, and by {\eqref{eq:derpotf} with $\p^{\iot}_x = \p_{i}$}, we obtain 
\begin{align}
|I|&\leq C \int_{s}^{T} \frac{1}{(T-\tau)^{\gamma}}\(\frac{1}{(\t-s)^{\frac{1-\av}{2}}}+
\frac{1}{(\t-t)^{\frac{1-\av}{2}}}\) \|f\|_{\Linf{T,\gamma}{\av}} d\t \intertext{(since
$\t-s<\t-t$)} &\leq C \int_{s}^{T} \frac{1}{(T-\tau)^{\gamma}(\t-s)^{\frac{1-\av}{2}}}  d\t
\|f\|_{\Linf{T,\gamma}{\av}}\\ &\leq C
{(T-s)^{\frac{1+\av}{2}-\gamma}}\|f\|_{\Linf{T,\gamma}{\av}} \intertext{(as ${T-s\leq 2 \h}$)}
&\leq C h^{\frac{1+\av}{2}}(T-s)^{-\gamma}\|f\|_{\Linf{T,\gamma}{\av}},
\end{align}
which is \eqref{eq:bound_I}. The proof of \eqref{reg_integral_Vparam_bis} is completely analogous,
and thus is omitted for brevity.
\end{proof}

\begin{proof}[Proof of Proposition \ref{prop:holder_bounds_V_param_Phi} for ${\bf F} = \Phi$.]
Estimates \eqref{eq:bound}-\eqref{eq:bound_der_first}-\eqref{eq:bound_der_second} can be easily obtained from estimates \eqref{eq:est_Phi}-\eqref{eq:est_der_first_Phi}-\eqref{eq:est_der_second_Phi}, respectively. The details are omitted for sake of brevity. 

By \eqref{dercampidritti} we obtain 
\begin{align}
|\p_{i j} V_{\Phi,f} (t,x+h\ee_k) - \p_{i j} V_{\Phi,f}(t,x)  |
&\leq C \int_t^T\int_{\R^N}  |h|^{{\av}} \frac{\G^{2\m}(t,x +h \mathbf{e}_k
;\t,\y)+\G^{2\m}(t,x;\t,\y)}{(T-\t)^{\g}(\t-t)^{\frac{2- (\aa-\av) }{2}}}d\y \, d\t \,
\|f\|_{\Linf{T,\gamma}{\av}} \intertext{{(integrating in $\y$)}} &{\leq C} |h|^{{\av}} \int_t^T
\frac{1}{(T-\t)^{\g}(\t-t)^{\frac{2- (\aa-\av) }{2}}} d\t \|f\|_{\Linf{T,\gamma}{\av}}\\ &\leq C
|h|^{\av} (T-t)^{-\g+\frac{\aa-\av}{2}} \|f\|_{\Linf{T,\gamma}{\av}},
\end{align}
which proves \eqref{fcampidritti}.

We now prove \eqref{reg_integral_Vparam}. By adding and subtracting, we have
\begin{equation}
\begin{aligned}
\p_{i} V_{{\bf \Phi},f} \big(s,e^{(s-t)B}x\big) - \p_{i} V_{{\bf \Phi},f}(t,x)& =  \int_{s}^T
\underbrace{\int_{\R^N} \Big(\p_{i}\Phi(s,e^{(s-t)B}x;\t,\y)-\p_{i}\Phi(t,x;\t,\y)\Big)
f(\t,\y)d\y }_{=: I(\tau)} d\t  \\ &\quad -\underbrace{\int_{t}^{s}\int_{\R^N}
\p_{i}\Phi(t,x;\t,\y) f(\t,\y)d\y d\t}_{=: L}.
\end{aligned}
\end{equation}
We first bound the first integral. By applying \eqref{dercurveinteg} in the case $s-t<\tau -s$,
and \eqref{eq:est_der_first_Phi} in the case $s-t \geq \tau -s$, we obtain
\begin{equation}
I(\tau)\leq C \|f\|_{\Linf{T,\gamma}{\av}} (s-t)^{\frac{1+\av}{2}} (T-\tau)^{-\gamma}
(\tau-s)^{-1+\frac{\aa -\av}{2}},
\end{equation}
which yields
\begin{equation}\label{eq:est_I_pot_campi}
\Big| \int_{s}^T  I(\tau)  d \tau \Big| \leq C
\|f\|_{\Linf{T,\gamma}{\av}}(T-s)^{-\gamma+\frac{\aa -\av}{2}} (s-t)^{\frac{1+\av}{2}} .
\end{equation}
As for $L$, estimate \eqref{eq:est_der_first_Phi} 
yields
\begin{align}
|L| & \leq C \int_{t}^{s} \frac{1}{(T - \tau )^{\gamma} (\tau-t)^{\frac{1-\aa}{2}}} \int_{\R^N}
\G^{2\m}(t,x;\tau,\y)  d\y d\t \|f\|_{\Linf{T,\gamma}{\av}} \intertext{(by integrating in $\y$,
and since $T-s\leq T-\tau$)} & \leq C (T-s)^{-\gamma}  \int_{t}^{s} \frac{1}{
(\tau-t)^{\frac{1-\aa}{2}}} d\t \|f\|_{\Linf{T,\gamma}{\av}} \leq C (T-s)^{-\gamma +
\frac{\aa-\av}{2}} (s-t)^{\frac{1+\av}{2}} \|f\|_{\Linf{T,\gamma}{\av}},
\end{align}
which, together with \eqref{eq:est_I_pot_campi}, proves \eqref{reg_integral_Vparam}.

The proof of \eqref{reg_integral_Vparam_bis} is completely analogous, by employing
\eqref{eq:est_der_second_Phi}-\eqref{dercurveinteg2} in place of
\eqref{eq:est_der_first_Phi}-\eqref{dercurveinteg}.
\end{proof}

\subsection{H\"older estimates for 
$V_{g}$}

In this section we prove the following H\"older estimates for 
$V_{g}$, on which the proof of Theorem \ref{th:main} relies. 
\begin{proposition}\label{prop:holder_bounds_Vg}
For any $i,j,k=1,\dots,d$, we have
\begin{align}\label{eq:bound_g}
  | V_{g}(t,x)  | &\leq C \| g \|_{\Canis{\beta}}, \\
\label{eq:bound_der_first_g}
  | \p_{i} V_{g}(t,x)  | &\leq C (T-t)^{-\frac{(1 - \beta) \vee 0}{2}} \| g \|_{\Canis{\beta}}, \\
| \p_{i j} V_{g}(t,x)  |&\leq C (T-t)^{-\frac{(2 - \beta) \vee 0}{2}} \| g \|_{\Canis{\beta}},
\label{eq:bound_der_second_g}
\end{align}
and
\begin{align}\label{fcampidritti_g}
|\p_{i j} V_{g} (t,x+h\ee_k) - \p_{i j} V_{g}(t,x)  |
&\leq C |h|^{\av} (T-t)^{-\frac{2 + \av - \beta }{2}} \| g \|_{\Canis{\beta}}, 
 \\ \label{reg_integral_Vparam_g}
  |\p_{i} V_{g} \big(s,e^{(s-t)B}x\big) - \p_{i} V_{g}(t,x)  |
&\leq C(s-t)^{\frac{1+\av}{2}}(T-s)^{-\frac{2 + \av - \beta }{2}} \| g \|_{\Canis{\beta}}, 
\\ \label{reg_integral_Vparam_bis_g}
|\p_{i j} V_{g} \big(s,e^{(s-t)B}x\big) - \p_{i j} V_{g}(t,x)  |
&\leq C(s-t)^{\frac{\av}{2}} (T-s)^{-\frac{2 + \av - \beta }{2}} \| g \|_{\Canis{\beta}}, 
\end{align}
for any $0<t<s<T$, $x\in\rn$ and $h\in \R$. 
\end{proposition}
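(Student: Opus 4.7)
The plan is to mirror the proof of Proposition \ref{prop:holder_bounds_V_param_Phi}, replacing the pointwise bounds on $f$ with an anisotropic Taylor-type cancellation against the terminal datum $g$. The uniform bound \eqref{eq:bound_g} is immediate: since $g$ is bounded and $p(t,x;T,\cdot)$ has essentially bounded mass (integrate Propositions \ref{lem:deriv_theta_param}--\ref{lem:deriv_theta} in $y$), one has $|V_g(t,x)|\le C\|g\|_{L^\infty}\le C\|g\|_{\Canis{\beta}}$.

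For the pointwise derivative bounds \eqref{eq:bound_der_first_g}--\eqref{eq:bound_der_second_g}, I exploit the anisotropic Hölder regularity of $g$ via a cancellation trick. Setting $\bar x := e^{(T-t)B}x$, and writing $P_k$ for the anisotropic Taylor polynomial of $g$ at $\bar x$ of order $k:=\min(|\iota|-1,\lfloor \beta\rfloor)$ when $\beta\ge |\iota|$ (and no subtraction otherwise), I invoke the anisotropic remainder bound $|g(y)-P_k(y)|\le C\|g\|_{\Canis{\beta}}|y-\bar x|_B^{\beta\wedge(2+\av)}$. Then I split
\begin{equation}
\partial^\iota V_g(t,x) = \int \partial^\iota p(t,x;T,y)\bigl(g(y)-P_k(y)\bigr) dy + \int \partial^\iota p(t,x;T,y)P_k(y) dy.
\end{equation}
The first integral is controlled by the Gaussian moment estimate $\int \G^{2\m}(t,x;T,y)|y-\bar x|_B^{\gamma} dy \le C(T-t)^{\gamma/2}$ combined with Propositions \ref{lem:deriv_theta_param}--\ref{lem:deriv_theta}, yielding the claimed power $(T-t)^{-((|\iota|-\beta)\vee 0)/2}$. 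The second integral involves $\int \partial^\iota p\cdot Q dy$ for polynomials $Q$ of $B$-degree at most $k<|\iota|$, which are handled by moving spatial derivatives from $x$ to $y$ via integration by parts, using the explicit Gaussian form of $\param$ to compute moments exactly and the Gaussian bounds on $\Phi$ to dominate the remainder.

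For the Hölder regularity estimates \eqref{fcampidritti_g}--\eqref{reg_integral_Vparam_g}--\eqref{reg_integral_Vparam_bis_g}, I follow the same two-regime dichotomy used in the proof of Proposition \ref{prop:holder_bounds_V_param_Phi}. When the perturbation size ($|h|^2$ or $s-t$) is small compared to the time-to-maturity $T-t$, I apply the kernel Hölder estimates \eqref{dercampidritti_param}, \eqref{dercampidritti}, \eqref{dercurveinteg_param}, \eqref{dercurveinteg}, \eqref{dercurveinteg2_param}, \eqref{dercurveinteg2} paired with the cancelled integrand $g-P_k$, gaining the required extra factor $|h|^\av$ or $(s-t)^{\av/2}$ from the kernel side. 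In the complementary large-perturbation regime, the triangle inequality combined with \eqref{eq:bound_der_first_g}--\eqref{eq:bound_der_second_g} applied separately at the two arguments yields the same power, with the size comparison absorbing the missing Hölder factor.

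The main obstacle is the careful bookkeeping of the Taylor cancellations across the subregimes $\beta\in[0,1]$, $\beta\in(1,2]$, and $\beta\in(2,2+\av]$, and in particular verifying that the polynomial remainder terms $\int \partial^\iota p \cdot P_k dy$ decay with exactly the right power of $(T-t)$. The parametrix structure of \cite{lucertini2022optimal}, and especially Lemma \ref{lem:deriv_param_sol1} together with the matching Gaussian bounds on $\Phi$, is what makes this bookkeeping work uniformly in $\beta$; without it, the higher-order cancellations for $\beta>1$ would not close up.
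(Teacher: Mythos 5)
Your high-level idea --- subtracting an anisotropic Taylor polynomial of $g$ at $\bar x = e^{(T-t)B}x$, controlling the remainder via the moment bound $\int \G^{2\m}(t,x;T,y)|y-\bar x|_B^\beta\,dy\le C(T-t)^{\beta/2}$, and using a two-regime dichotomy on $|h|^2$ and $s-t$ --- matches the skeleton of the paper's proof. But there is a genuine gap in how you handle the polynomial piece $\int \partial^\iota p(t,x;T,y)\,P_k(y)\,dy$, and that gap is precisely what the paper's Remark~\ref{rem:g_reduction} is designed to avoid.

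Your plan is to move $x$-derivatives to $y$-derivatives by integration by parts, computing moments exactly for $\param$ and ``dominating the remainder'' for $\Phi$ via the Gaussian bounds of Proposition~\ref{lem:deriv_theta}. This does not close. First, $\param(t,x;T,y)=\gauss\big(\C^{(T,y)}(t,T),y-e^{(T-t)B}x\big)$ has a frozen covariance $\C^{(T,y)}$ that depends on the integration variable $y$; consequently $\partial_{x_i}\param$ is not equal to $-\partial_{y_i}\param$ up to the exact scaling, and the would-be explicit moment computation picks up extra terms coming from $\partial_{y}\C^{(T,y)}$ that you do not control. Second, and more fundamentally, the $\Phi$-part has no cancellation against the polynomial: the available estimate is $|\partial_{ij}\Phi(t,x;T,y)|\le C(T-t)^{-(2-\aa)/2}\G^{2\m}$, so the crude bound $\int |\partial_{ij}\Phi|\,|P_k|\,dy \lesssim (T-t)^{-(2-\aa)/2}\|g\|_{\Canis{\beta}}$ is strictly worse than the target $(T-t)^{-\frac{(2-\beta)\vee 0}{2}}$ whenever $\beta>\aa$ (e.g., $\beta=2+\av$, $\aa$ close to $1$). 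There is no pointwise Gaussian estimate on $\Phi$ that encodes the needed cancellation against low-degree polynomials.

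The paper sidesteps this entirely by going through the equation rather than through the kernel. One introduces the \emph{truncated} Taylor polynomial $\tilde T_{\beta,\bar x}g$ (with a cut-off $\psi$, so it is bounded and lies in $\Cint{T}{2+\av}$ with $\Lcalp\tilde T_{\beta,\bar x}g\in\Linf{T}{\av}$, both norms $\lesssim\|g\|_{\Canis{\beta}}$). Then $u_{\bar x}:=V_g-\tilde T_{\beta,\bar x}g$ solves the Cauchy problem with right-hand side $f=-\Lcalp\tilde T_{\beta,\bar x}g$ and terminal datum $R^g_{\beta,\bar x}$, so by Duhamel $u_{\bar x}=V_{R^g_{\beta,\bar x}}-V_{\param,f}-V_{\Phi,f}$. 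The already-proved Proposition~\ref{prop:holder_bounds_V_param_Phi} handles $V_{\param,f}$ and $V_{\Phi,f}$ (with $\gamma=0$, giving bounds at least as strong as required), and one is left to estimate only $V_{R^g_{\beta,\bar x}}$ using Lemma~\ref{lem:taylor_anis} and the kernel bounds. In other words, the polynomial is never integrated against derivatives of $p$; it is absorbed by the PDE. If you wish to salvage your route, you would have to prove a moment-cancellation lemma for $\Phi$ of the form $\big|\int \partial_{ij}\Phi(t,x;T,y)\,(y-\bar x)^\kappa\,dy\big|\lesssim (T-t)^{([\kappa]_B-2+\aa)/2}$, which is a nontrivial additional input not present in Proposition~\ref{lem:deriv_theta} and which the cited reference does not supply. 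Absent that, the argument stalls exactly at the subregimes $\beta\in\,]1,2]$ and $\beta\in\,]2,2+\av]$ you flag as the ``main obstacle''.

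Two smaller points. The exponent ``$\beta\wedge(2+\av)$'' in your remainder bound is redundant since $\beta\le 2+\av$ by hypothesis. And the truncation by $\psi$ in the paper's $\tilde T_{\beta,\bar x}g$ is not a cosmetic detail: without it the polynomial is unbounded and neither $\tilde T\in\Cint{T}{2+\av}$ nor $\Lcalp\tilde T\in\Linf{T}{\av}$ holds, so Proposition~\ref{prop:holder_bounds_V_param_Phi} cannot be invoked. Your untruncated $P_k$ would also cause the Gaussian moment integrals to run off to infinity when paired with $\G^{2\m}$ only through its tail, which is another reason the polynomial piece must be handled structurally, not by brute estimation.
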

Recall that, by assumption, $g\in\Canis{\beta}$ with ${\beta \in [0,2+\av]}$. Therefore, for any
fixed $\bar x \in\rn$, {
 the following
truncated Taylor polynomials are well defined
\begin{equation}
\tilde T_{\beta,\bar x}g( y)  :=
  \begin{cases}\psi(y-\bar{x}) g(\bar x),&  \text{if } \beta\in]0,1],  \\
\psi(y-\bar{x})  \Big(  g(\bar x) +   \sum\limits_{i=1}^d \partial_{i} g(\bar x) (y_i - \bar x_i) \Big) 
, &  \text{if } \beta\in]1,2], \\
\psi(y-\bar{x})  \Big( g(\bar x) +   \sum\limits_{i=1}^d \partial_{i} g(\bar x) (y_i - \bar x_i) 
 +  \frac{1}{2} \sum\limits_{i,j=1}^d \partial_{ij} g(\bar x) (y_i - \bar x_i) (y_j - \bar x_j) \Big) 
 , &  \text{if } \beta\in]2,3],
 \end{cases}
\end{equation}
for any $y\in\rn$, where $\psi$ is a 
cut-off function such that $\psi(x) = 1$ if $|x|_B\leq 1$.
We also set the remainder
\begin{equation}
R^g_{\beta,\bar{x}}( y) := g(y) - \tilde T_{\beta,\bar{x}}g
(y) , \qquad \bar x, y \in\rn.
\end{equation}
}

The next lemma is a straightforward consequence of the definition of anisotropic norm $\| \cdot
\|_{\Canis{\beta}}$. 
\begin{lemma}[\bf Taylor formula]\label{lem:taylor_anis} We have
\begin{equation}
| R^g_{\beta,\bar{x}}( y) | \leq C \| g \|_{\Canis{\beta}}  |   y - \bar x   |^{\beta}_B, \qquad
y,\bar x \in \rn.
\end{equation}
\end{lemma}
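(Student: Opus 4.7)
The plan is to dispatch the bound according to the size of $|y-\bar x|_B$ relative to the cut-off scale. Let $c>0$ be such that $\psi$ vanishes outside $\{|x|_B\le c\}$ and equals $1$ on $\{|x|_B\le 1\}$.

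\emph{Far regime} $|y-\bar x|_B\ge 1$. In this case the required inequality is trivial. Indeed, $g$ and (when $\beta>1$) its first-order derivatives $\partial_i g$, and (when $\beta>2$) its second-order derivatives $\partial_{ij}g$ exist and are bounded by $\|g\|_{\Canis{\beta}}$, by the recursive definition of the anisotropic norm. Since $\psi$ is supported in the $B$-ball of radius $c$, the monomials $(y_i-\bar x_i)$ multiplied by $\psi(y-\bar x)$ are bounded uniformly. Thus $|\tilde T_{\beta,\bar x}g(y)|\le C\|g\|_{\Canis{\beta}}$, giving
\begin{equation}
|R^g_{\beta,\bar x}(y)|\le |g(y)|+|\tilde T_{\beta,\bar x}g(y)|\le C\|g\|_{\Canis{\beta}}\le C\|g\|_{\Canis{\beta}}\,|y-\bar x|^\beta_B,
\end{equation}
using $|y-\bar x|_B\ge 1$ and $\beta\ge 0$ in the last step.

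\emph{Near regime} $|y-\bar x|_B<1$. Here $\psi(y-\bar x)=1$, so $\tilde T_{\beta,\bar x}g(y)$ coincides with the plain Taylor polynomial. Split the increment as $y-\bar x=z+w$ with $z:=(y_1-\bar x_1,\dots,y_d-\bar x_d,0,\dots,0)$ and $w:=(0,\dots,0,y_{d+1}-\bar x_{d+1},\dots,y_N-\bar x_N)$. The increment $y\to\bar x+z$ is purely in the non-$d$ directions, so the dedicated seminorm appearing in the recursive definition of $\|g\|_{\Canis{\beta}}$ (or the plain Hölder seminorm if $\beta\in\,]0,1]$) yields
\begin{equation}
|g(y)-g(\bar x+z)|\le \|g\|_{\Canis{\beta}}\,|w|^\beta_B\le \|g\|_{\Canis{\beta}}\,|y-\bar x|^\beta_B.
\end{equation}
For the remaining increment $\bar x\to \bar x+z$, the map $s\mapsto g(\bar x+sz)$ is classically $C^k$ on $[0,1]$ with $k=\lfloor\beta\rfloor$ because $\nabla_d g$ and (when $\beta>2$) $\nabla_d^2 g$ exist. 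A standard integral Taylor expansion around $s=0$ and an anisotropic Hölder bound on the highest-order derivative give, for instance in the case $\beta\in\,]1,2]$,
\begin{equation}
\Big|g(\bar x+z)-g(\bar x)-\sum_{i=1}^d \partial_i g(\bar x)\,z_i\Big|\le \sum_{i=1}^d|z_i|\int_0^1\|\partial_i g\|_{\Canis{\beta-1}}\,|sz|_B^{\beta-1}\,ds\le C\|g\|_{\Canis{\beta}}\,|z|^\beta_B,
\end{equation}
since $|z_i|\le |z|_B$ (the $B$-weight on the first $d$ coordinates is $1$). The case $\beta\in\,]2,3]$ is analogous, with second-order Taylor remainder controlled by $\|\nabla_d^2 g\|_{\Canis{\beta-2}}\le C\|g\|_{\Canis{\beta}}$. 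Combining the two contributions and using $|z|_B,|w|_B\le |y-\bar x|_B$ produces the claimed bound.

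The only subtle point is checking that the splitting $z+w$ is compatible with the anisotropic norm: this works because the $B$-length is additive over coordinates and the first $d$ components carry exponent one, so classical Taylor remainders in $z$ are naturally bounded by a $B$-homogeneous quantity. No circularity arises because the Taylor expansion uses only the classical derivatives $\nabla_d g, \nabla_d^2 g$ that are explicitly included in the definition of $\|g\|_{\Canis{\beta}}$ for $\beta>1$ and $\beta>2$ respectively.
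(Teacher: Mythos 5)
Your proposal is correct, and it fills in the argument that the paper explicitly elides with the remark that the lemma ``is a straightforward consequence of the definition of anisotropic norm''. The two-scale dichotomy (far regime trivial because of the compactly supported cut-off $\psi$, near regime handled by splitting $y-\bar x$ into its first-$d$ block $z$ and its complementary block $w$, using classical Taylor with integral remainder for the $z$-part and the dedicated $(0,\eta)$-seminorm for the $w$-part) is exactly the decomposition the definition of $\|\cdot\|_{\Canis{\beta}}$ is designed to enable, and your observation that the first $d$ coordinates carry $B$-weight one, so that $|sz|_B=s|z|_B$ and $|z_i|\le|z|_B$, is the key point that makes the classical remainder estimate land in the right $B$-homogeneous form.
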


\begin{remark}\label{rem:g_reduction}
For any $\bar x\in \rn$, {
 we have}
\begin{equation}\label{eq:estimates_taylor_holder}
\| 
{\tilde T_{\beta,\bar{x}}g } \|_{\Cint{T}{2+\av}} + \| \Lcalp {\tilde T_{\beta,\bar{x}}g}
\|_{\Linf{T}{\av}} \leq C \| g \|_{\Canis{\beta}}.
\end{equation}
Now set $u_{\bar x}(t,x):=V_{g}(t,x) - 
{\tilde T_{\beta,\bar{x}}g(x)}$ so that
\begin{equation}
V_{g}(t,x) = 
{\tilde T_{\beta,\bar{x}}g(x)} + u_{\bar x}(t,x).
\end{equation}
By the first part of Theorem \ref{th:main}, $V_{g}$ is the solution to the Cauchy problem
\ref{eq:Cauchy_prob} with $f=0$. Therefore, it is easy to check that
$u_{\bar x}$ is the solution to the Cauchy problem \ref{eq:Cauchy_prob} with
\begin{equation}
f = - \Lcalp {\tilde T_{\beta,\bar{x}}g} 
\end{equation}
and terminal datum given by $R^g_{\bar x ,\beta}$. In particular (see \eqref{eq:repres_u_gzero}), $u_{\bar x}
$ is of the form
\begin{equation}
u_{\bar x}
 = V_{R^g_{\bar x ,\beta}} - V_{\param,f}
  - V_{\Phi,f}.
\end{equation}
Therefore, owing to Proposition \ref{prop:holder_bounds_V_param_Phi} and to
\eqref{eq:estimates_taylor_holder}, in order to prove the inequalities in Proposition
\ref{prop:holder_bounds_Vg}, it is sufficient to prove them for $V_{R^g_{\bar x ,\beta}}$, with an
arbitrary $\bar x \in \rn$.
\end{remark}

\begin{proof}[Proof of Proposition \ref{prop:holder_bounds_Vg}
]

Let $0<t<s<T$, $x\in\rn$ and $h\in \R$ be fixed.
For brevity, we only prove \eqref{eq:bound_der_second_g},  \eqref{fcampidritti_g} and
\eqref{reg_integral_Vparam_bis_g}, the proofs of \eqref{eq:bound_g}, \eqref{eq:bound_der_first_g}
and \eqref{reg_integral_Vparam_g} begin simpler.


We first prove \eqref{eq:bound_der_second_g}. By Remark \ref{rem:g_reduction}, it is enough to
prove the estimate for $V_{R^g_{\xi,\beta}}$ with $\x:=e^{(T-t)B}x$.
%
By \eqref{eq:represent_deriv_potent_param_g}, which remains true for $V_{R^g_{\xi,\beta}}$, and
Lemma \ref{lem:taylor_anis}, we obtain
\begin{align}
| \p_{i j} V_{R^g_{\xi,\beta}}(t,x)  |
&\leq C \| g \|_{\Canis{\beta}} \int_{\rn} \frac{\G^{\m+\e}(t,x;\T,\y)}{\T-t} |\y-\x|_B^\b d\y
\intertext{(by the Gaussian estimates in \cite[Lemma A.6]{lucertini2022optimal}, and integrating
in $\eta$)} &\leq C \| g \|_{\Canis{\beta}} (T-t)^{-\frac{2 - \beta}{2}} .
\end{align}

We now prove \eqref{fcampidritti_g} by considering two separate cases.
\\\vspace{2pt}\emph{Case $h^2\geq T-t$.}
By employing triangular inequality, we have
\begin{align}
|\p_{i j} V_{g} (t,x+h\ee_k) - \p_{i j} V_{g}(t,x)  | &\leq 
|\p_{i j} V_{g} (t,x+h\ee_k) | + | \p_{i j} V_{g}(t,x)  | \intertext{(by
\eqref{eq:bound_der_second_g} that we have just proved)} &\leq
\frac{C}{(T-t)^{\frac{2-\b}{2}}}\|g\|_{\Canis{\beta}}
\leq C\frac{|h|^\n}{(T-t)^{\frac{2-\b+\av}{2}}}\|g\|_{\Canis{\beta}},
\end{align}
where we used $T-t \leq h^2$ in the last inequality.
\\\vspace{2pt}\emph{Case $h^2 < T-t$.} Once more, by Remark \ref{rem:g_reduction}, it is enough to prove the estimate for $V_{R^g_{\xi,\beta}}$ with $\x:=e^{(T-t)B}x$. First we note that, 
setting $\x':=e^{(T-t)B}(x+h\ee_k)$, \cite[Lemma A.4]{lucertini2022optimal} yields
\begin{align}
|\x'-\x|_B=&|h e^{(T-t)B} \ee_k |_B \leq C \sum_{j=0}^q |h (T-t)^{j}|^{\frac{1}{2j+1} } \leq C
(T-t)^{\frac{1}{2}},
\end{align}
and thus
\begin{align}
\G^{2\m}(t,x +h \mathbf{e}_k ;T,\y) |\y-\x|_B^\b 
& \leq \G^{2\m}(t,x +h \mathbf{e}_k ;T,\y)(|\y-\x'|_B^\b+|\x'-\x|_B^\b ) \intertext{(by
\cite[Lemma A.6]{lucertini2022optimal})} &\leq C(T-t)^{\frac{\b}{2}} \G^{3\m}(t,x +h \mathbf{e}_k
;T,\y) + \G^{2\m}(t,x +h \mathbf{e}_k ;T,\y) |\x'-\x|_B^\b \\
&\leq C(T-t)^{\frac{\b}{2}} \G^{3\m}(t,x +h \mathbf{e}_k ;T,\y). \label{eq:estimatexip0}
\end{align}
By Proposition \ref{prop:repr_der_poten_g} and Lemma \ref{lem:taylor_anis}, we obtain
\begin{align}
|\p_{i j} V_{R^g_{\xi,\beta}}(t,x+h\ee_k) - \p_{i j} V_{R^g_{\xi,\beta}}(t,x) | &\leq C  \| g \|_{\Canis{\beta}} \int_{\R^N} |\p_{i j}\sol(t,x+h\ee_k;T,\y)-\p_{i j}\sol(t,x;T,\y)| 
|\y-\x|_B^\b d\y 
\intertext{(by \eqref{dercampidritti_param}-\eqref{dercampidritti})} &\leq C\| g
\|_{\Canis{\beta}} |h|^{{\av}} \int_{\R^N}  \frac{\G^{2\m}(t,x +h \mathbf{e}_k
;T,\y)+\G^{2\m}(t,x;T,\y)}{(T-t)^{\frac{2+ \av}{2}}} |\y-\x|_B^\b d\y \\
\intertext{(by the estimate \eqref{eq:estimatexip0} and, once more, the Gaussian estimates in
\cite[Lemma A.6]{lucertini2022optimal})}
&\leq C  \| g \|_{\Canis{\beta}} |h|^{{\av}}\int_{\R^N}  \frac{\G^{3\m}(t,x +h \mathbf{e}_k ;T,\y)
+ \G^{2\m}(t,x;T,\y)}{(T-t)^{\frac{2+ \av-\b}{2}}}d\y\\ \intertext{{(integrating in $\y$)}} &{\leq
C} \| g \|_{\Canis{\beta}} {|h|^{\av}} (T-t)^{\frac{-2-\n+\b}{2}}.
\end{align}

We finally prove \eqref{reg_integral_Vparam_g} 
by considering two separate cases.
\\ \vspace{2pt}\emph{Case $s-t\geq T-s$.}
By employing triangular inequality, we have
\begin{align}
|\p_{i j} V_{g} \big(s,e^{(s-t)B}x\big) - \p_{i j} V_{g}(t,x)  | & \leq |\p_{i j} V_{g}
\big(s,e^{(s-t)B}x\big) |+| \p_{i j} V_{g}(t,x)  | \intertext{(by \eqref{eq:bound_der_second_g})}
&\leq {C} \|g\|_{\Canis{\beta}} \big(  (T-t)^{\frac{-2+\b}{2}} + (T-s)^{\frac{-2+\b}{2}}  \big)
\leq C \|g\|_{\Canis{\beta}} {(s-t)^\n} (T-t)^{\frac{-2-\av+\beta}{2}},
\end{align}
where we used $ T-s \leq s-t \leq T-t $ in the last inequality.
\\ \vspace{2pt}\emph{Case $s-t < T-s$.} Once more, by Remark \ref{rem:g_reduction}, it is enough to prove the estimate for $V_{R^g_{\xi,\beta}}$ with $\x:=e^{(T-t)B}x$. By Proposition \ref{prop:repr_der_poten_g} and Lemma \ref{lem:taylor_anis} we obtain
\begin{align}
|\p_{i j} V_{R^g_{\xi,\beta}} \big(s,e^{(s-t)B}x\big) - \p_{i j} V_{R^g_{\xi,\beta}}(t,x)| & \leq
C \|g\|_{\Canis{\beta}} \int_{\rn} |\p_{ij}\sol(s,e^{(s-t)B}x;T,\y) - \p_{ij}\sol(t,x;T,\y)|
|\y-\x|_B^\b d \y \intertext{(by \eqref{dercurveinteg2_param}-\eqref{dercurveinteg2}, and since
$s-t<T-s$)}
&\leq C \| g \|_{\Canis{\beta}}\frac{(s-t)^{\frac{\av}{2}} }{(T-s)^{1+\frac{\av}{2}}} \int_{\R^N}
\G^{2\m}(t,x;T,\y) |\y-\x|_B^\b d\y \intertext{(by the Gaussian estimates in \cite[Lemma
A.6]{lucertini2022optimal}, and integrating in $d \y$)} & \leq C \| g
\|_{\Canis{\beta}}(T-s)^{\frac{-1-\av+\beta}{2}} (s-t)^{\frac{\av}{2}} .
\end{align}
\end{proof}

\bibliographystyle{siam}
\bibliography{BibTeX-Final}

\begin{thebibliography}{10}

\bibitem{MR1830951}
{\sc E.~Barucci, S.~Polidoro, and V.~Vespri}, {\em Some results on partial
  differential equations and {A}sian options}, Math. Models Methods Appl. Sci.,
  11 (2001), pp.~475--497.

\bibitem{biagi2022schauder}
{\sc S.~Biagi and M.~Bramanti}, {\em Schauder estimates for
  {K}olmogorov-{F}okker-{P}lanck operators with coefficients measurable in time
  and {H}\"older continuous in space}, Preprint, arXiv:2205.10270,  (2022).

\bibitem{MR4334312}
{\sc P.-E. Chaudru~de Raynal, I.~Honor\'{e}, and S.~Menozzi}, {\em Sharp
  {S}chauder estimates for some degenerate {K}olmogorov equations}, Ann. Sc.
  Norm. Super. Pisa Cl. Sci. (5), 22 (2021), pp.~989--1089.

\bibitem{difpas}
{\sc M.~Di~Francesco and A.~Pascucci}, {\em On a class of degenerate parabolic
  equations of {K}olmogorov type}, AMRX Appl. Math. Res. Express,  (2005),
  pp.~77--116.

\bibitem{difpol}
{\sc M.~Di~Francesco and S.~Polidoro}, {\em Schauder estimates, {H}arnack
  inequality and {G}aussian lower bound for {K}olmogorov-type operators in
  non-divergence form}, Adv. Differential Equations, 11 (2006), pp.~1261--1320.

\bibitem{dong2022global}
{\sc H.~Dong and T.~Yastrzhembskiy}, {\em Global {S}chauder estimates for
  kinetic {K}olmogorov-{F}okker-{P}lanck equations}, Preprint,
  arXiv:2209.00769,  (2022).

\bibitem{friedman-parabolic}
{\sc A.~Friedman}, {\em Partial differential equations of parabolic type},
  Prentice-Hall Inc., Englewood Cliffs, N.J., 1964.

\bibitem{MR4444114}
{\sc N.~Garofalo and G.~Tralli}, {\em Hardy-{L}ittlewood-{S}obolev inequalities
  for a class of non-symmetric and non-doubling hypoelliptic semigroups}, Math.
  Ann., 383 (2022), pp.~1--38.

\bibitem{MR4072211}
{\sc C.~Henderson and S.~Snelson}, {\em {$C^\infty$} smoothing for weak
  solutions of the inhomogeneous {L}andau equation}, Arch. Ration. Mech. Anal.,
  236 (2020), pp.~113--143.

\bibitem{MR4275241}
{\sc C.~Imbert and C.~Mouhot}, {\em The {S}chauder estimate in kinetic theory
  with application to a toy nonlinear model}, Ann. H. Lebesgue, 4 (2021),
  pp.~369--405.

\bibitem{MR4229202}
{\sc C.~Imbert and L.~Silvestre}, {\em The {S}chauder estimate for kinetic
  integral equations}, Anal. PDE, 14 (2021), pp.~171--204.

\bibitem{lanconelli2020local}
{\sc A.~Lanconelli, S.~Pagliarani, and A.~Pascucci}, {\em Local densities for a
  class of degenerate diffusions}, Ann. Inst. Henri Poincar\'{e} Probab. Stat.,
  56 (2020), pp.~1440--1464.

\bibitem{MR1972000}
{\sc E.~Lanconelli, A.~Pascucci, and S.~Polidoro}, {\em Linear and nonlinear
  ultraparabolic equations of {K}olmogorov type arising in diffusion theory and
  in finance}, in Nonlinear problems in mathematical physics and related
  topics, {II}, vol.~2 of Int. Math. Ser. (N. Y.), Kluwer/Plenum, New York,
  2002, pp.~243--265.

\bibitem{lanpol}
{\sc E.~Lanconelli and S.~Polidoro}, {\em On a class of hypoelliptic evolution
  operators}, vol.~52, 1994, pp.~29--63.
\newblock Partial differential equations, II (Turin, 1993).

\bibitem{MR1184023}
{\sc G.~M. Lieberman}, {\em Intermediate {S}chauder theory for second order
  parabolic equations. {IV}. {T}ime irregularity and regularity}, Differential
  Integral Equations, 5 (1992), pp.~1219--1236.

\bibitem{MR2136978}
{\sc L.~Lorenzi}, {\em Schauder estimates for degenerate elliptic and parabolic
  problems with unbounded coefficients in {${\Bbb R}^N$}}, Differential
  Integral Equations, 18 (2005), pp.~531--566.

\bibitem{lucertini2022optimal}
{\sc G.~Lucertini, S.~Pagliarani, and A.~Pascucci}, {\em Optimal regularity for
  degenerate {K}olmogorov equations with rough coefficients}, To appear in J.
  Evol. Equ. (arxiv.org/abs/2204.14158),  (2023).

\bibitem{MR1475774}
{\sc A.~Lunardi}, {\em Schauder estimates for a class of degenerate elliptic
  and parabolic operators with unbounded coefficients in {${\mathbb R}^n$}},
  Ann. Scuola Norm. Sup. Pisa Cl. Sci. (4), 24 (1997), pp.~133--164.

\bibitem{MR1751429}
{\sc M.~Manfredini}, {\em The {D}irichlet problem for a class of ultraparabolic
  equations}, Adv. Differential Equations, 2 (1997), pp.~831--866.

\bibitem{pagpaspig}
{\sc S.~Pagliarani, A.~Pascucci, and M.~Pignotti}, {\em Intrinsic {T}aylor
  formula for {K}olmogorov-type homogeneous groups}, J. Math. Anal. Appl., 435
  (2016), pp.~1054--1087.

\bibitem{MR3660883}
{\sc S.~Pagliarani, A.~Pascucci, and M.~Pignotti}, {\em Intrinsic expansions
  for averaged diffusion processes}, Stochastic Process. Appl., 127 (2017),
  pp.~2560--2585.

\bibitem{MR2791231}
{\sc A.~Pascucci}, {\em P{DE} and martingale methods in option pricing}, vol.~2
  of Bocconi \& Springer Series, Springer, Milan; Bocconi University Press,
  Milan, 2011.

\bibitem{pascucci2022sobolev}
{\sc A.~Pascucci and A.~Pesce}, {\em Sobolev embeddings for kinetic
  {F}okker-{P}lanck equations}, preprint arXiv: 2209.05124,  (2022).

\bibitem{MR1386366}
{\sc S.~Polidoro}, {\em On a class of ultraparabolic operators of
  {K}olmogorov-{F}okker-{P}lanck type}, Matematiche (Catania), 49 (1994),
  pp.~53--105 (1995).

\bibitem{MR4405316}
{\sc S.~Polidoro, A.~Rebucci, and B.~Stroffolini}, {\em Schauder type estimates
  for degenerate {K}olmogorov equations with {D}ini continuous coefficients},
  Commun. Pure Appl. Anal., 21 (2022), pp.~1385--1416.

\bibitem{MR2534181}
{\sc E.~Priola}, {\em Global {S}chauder estimates for a class of degenerate
  {K}olmogorov equations}, Studia Math., 194 (2009), pp.~117--153.

\bibitem{zhang2021cauchy}
{\sc X.~Zhang and X.~Zhang}, {\em Cauchy problem of stochastic kinetic
  equations}, preprint arXiv: 2103.02267,  (2021).

\end{thebibliography}

%

\end{document}